\renewcommand{\setminus}{{\smallsetminus}}
\newcommand{\bp}{\begin{pmatrix}}
\newcommand{\ep}{\end{pmatrix}}
\newcommand{\be}{\begin{equation}}
\newcommand{\ee}{\end{equation}}
\newcommand{\ol}[1]{\overline{#1}}
\newcommand{\smfrac}[2]{\mbox{\footnotesize$\displaystyle\frac{#1}{#2}$}} 
\numberwithin{equation}{section}
\theoremstyle{plain}
\newtheorem{theorem}[equation]{Theorem}
\newtheorem{lemma}[equation]{Lemma}
\newtheorem{proposition}[equation]{Proposition}
\newtheorem{corollary}[equation]{Corollary}
\newtheorem*{claim*}{Claim}
\theoremstyle{definition}
\newtheorem{definition}[equation]{Definition}
\numberwithin{equation}{section}
 \newtheoremstyle{TheoremNum}
        {}{}              
        {\itshape}                      
        {}                              
        {\bfseries}                     
        {.}                             
        { }                             
        {\thmname{#1}\thmnote{ \bfseries #3}}
\theoremstyle{TheoremNum}
\def\Z{\mathbb Z}
\def\Q{\mathbb Q}
\def\wt#1{\widetilde{#1}}
\def\sm{\setminus}
\def\S{\Sigma}
\def\a{\alpha}
\def\toiso{\xrightarrow{\simeq}}
\def\ll{\langle}
\def\rr{\rangle}
\def\bp{\begin{pmatrix}}
\def\ep{\end{pmatrix}}
\def\ba{\begin{array}}
\def\ea{\end{array}}
\def\bn{\begin{enumerate}}
\def\en{\end{enumerate}}
\DeclareMathOperator\cl{cl}
\DeclareMathOperator\lk{lk}
\DeclareMathOperator\Hom{Hom}
\DeclareMathOperator\Id{Id}
\DeclareMathOperator\ord{ord}
\DeclareMathOperator\im{im}
\DeclareMathOperator\coker{coker}
\newcommand{\eps}{\varepsilon}
\begin{document}

\title{The four genus of a link, Levine-Tristram signatures and satellites}

\author{Mark Powell}
\address{D\'epartement de Math\'ematiques,
Universit\'e du Qu\'ebec \`a Montr\'eal, Canada}
\email{mark@cirget.ca}


\def\subjclassname{\textup{2010} Mathematics Subject Classification}
\expandafter\let\csname subjclassname@1991\endcsname=\subjclassname
\expandafter\let\csname subjclassname@2000\endcsname=\subjclassname
\subjclass{%
 57M25, 
 57M27, 
 57N70, 
}
\keywords{topological 4-genus, Levine-Tristram signatures, satellites}

\begin{abstract}
We give a new proof that the Levine-Tristram signatures of a link give lower bounds for the minimal sum of the genera of a collection of oriented, locally flat, disjointly embedded surfaces that the link can bound in the 4-ball.  We call this minimal sum the 4-genus of the link.

We also extend a theorem of Cochran, Friedl and Teichner to show that the 4-genus of a link does not increase under infection by a string link, which is a generalised satellite construction, provided that certain homotopy triviality conditions hold on the axis curves, and that enough Milnor's $\ol{\mu}$-invariants of the infection string link vanish.

We construct knots for which the combination of the two results determines the 4-genus.

\end{abstract}
\maketitle

\section{Introduction}

All links, surfaces and manifolds are oriented, all embeddings are topologically locally flat.
Let $L = L_1 \sqcup \dots \sqcup L_m$ be an oriented, ordered, $m$-component link in $S^3$.
The \emph{4-genus} of $L$ is $$g_4(L) = \min\Big\{\sum_{i=1}^m g_i \,\Big| \, g_i = g(\S_i),\, \S_1\sqcup \dots \sqcup \S_m \hookrightarrow D^4,\,  \partial \S_i=L_i  \Big\}.$$
The minimum is taken over topological locally flat embeddings of a disjoint collection of oriented surfaces into the 4-ball $D^4$, with oriented boundary $L$.
We have $g_4(L)=0$ if and only if $L$ is slice, and we write $g_4(L)=\infty$ for the minimum of the empty set.



In Theorem~\ref{theorem:LT-sig-4-ball-genus} we prove that the Levine-Tristram signatures give a lower bound for the 4-genus of a link, and we show in Theorem~\ref{Thm:our_main} that certain infection operators do not increase the 4-genus.  The two techniques yield upper and lower bounds for the 4-genus.  We combine the results to give new examples of knots for which our  techniques are able to determine the 4-genus.

\subsection{Levine-Tristram signatures and four genus}

Let $F$ be a connected Seifert surface for $L$ in $S^3$, and let $V \colon H_1(F;\Z) \times H_1(F;\Z) \to \Z$ be the Seifert form; let us also use $V$ to denote a matrix representative for the form in terms of a basis for $H_1(F;\Z)$.  The matrix
$$B(t) := (1-t)V + (1-t^{-1})V^T$$ determines a sesquilinear, hermitian form over $\Q(t)$, the field of fractions of the Laurent polynomial ring $\Z[\Z]=\Z[t,t^{-1}]$ (here we consider $\Z[t,t^{-1}]$ and $\Q(t)$ to be rings with involution by extending $\ol{t} = t^{-1}$).
For any complex number $z \in S^1$, $B(z)$ is a hermitian matrix over $\mathbb{C}$, with respect to the usual complex conjugation involution.  We may consider its signature $\sigma(B(z))$.

\begin{definition}\label{defn:LT-signature}
  The \emph{Levine-Tristram signature} of $L$ at $z= e^{i\theta} \in S^1$ is defined to be the average of the one-sided limits:
  \[\sigma_L(z) := \smfrac{1}{2}\Big(\lim_{\omega \to \theta_+} \sigma(B(e^{i\omega})) + \lim_{\omega \to \theta_-} \sigma(B(e^{i\omega}))\Big).\]
\end{definition}

The Levine-Tristram signature at $z$ turns out to be independent of the choice of Seifert surface $F$ and matrix representative~$V$~\cite{Tristram:1969-1}.   Note that the matrix $B(z)$ can have some zero eigenvalues, for example when $z=1$ and also whenever $\Delta_L(z)=0$. Here $\Delta_L(t):=\det(tV-V^T)$ is the Alexander polynomial of~$L$ (but see the official definition below).   However the signature is still defined at these values.  Taking the average of the one-sided limits as in Definition~\ref{defn:LT-signature} arranges that $\sigma_L(z)$ depends only on the concordance class of $L$.  In fact, for any $z \in S^1$ that is a root of some polynomial $p(t) \in \Z[t,t^{-1}]$ with $p(t) = p(t^{-1})$ and $|p(1)|=1$, there exists a slice knot $J$ with Alexander polynomial $\Delta_J(t) =p(t)$ whose signature function, without averaging, is nonzero at $z$~\cite{Cha-Liv-04}.  

The  Levine-Tristram signatures define a homomorphism $\mathcal{C} \to \mathbb{Z}^{\infty}$, where $\mathcal{C}$ is the knot concordance group~\cite{Levine:1969-1}, \cite{Levine:1969-2}.
For links, we remark that changing all the orientations of the components fixes the Levine-Tristram signatures.  However changing the orientation on a proper subset of the components can change the signatures in a much less predictable way.

To state the theorem relating Levine-Tristram signatures to the 4-genus we need a couple more definitions.
For an oriented, ordered $m$-component link $L \subset S^3$, let $X_L := S^3 \sm \nu L$ be the link exterior.  Define a homomorphism $\pi_1(X_L) \to \Z^m \to \Z$ by the abelianisation composed with the map sending $(x_1,\dots,x_m) \mapsto \sum_{i=1}^m x_i$.  The resulting twisted homology $H_1(X_L;\Z[\Z])$ is called the \emph{Alexander module} of $L$.

\begin{definition}\label{defn:nullity-link}
  The \emph{nullity} of a link $L$ is defined to be the rank of its Alexander module:
  \[\beta(L) := \dim(H_1(X_L;\Q(t))).\]
\end{definition}

We remark that $0 \leq \beta(L) \leq m-1$.  See \cite[Lemma~4.1]{BFP-14} for the argument, which is well-known to the experts.
We also remark that this definition of the nullity differs by one from the analogous definition used in, for example, \cite{Kauffman-Taylor:1976-1} and \cite{Murasugi:1965-1}.  For an application of $\beta(L)$ to lower bounds on the genera of cobordisms between links, see~\cite{Friedl-Powell-2014}.

\begin{definition}\label{defn:alex-poly-link}
  The \emph{Alexander polynomial} of a link $L$ is defined to be the order of its Alexander module:
  \[\Delta_L(t) := \ord_{\Z[\Z]}(H_1(X_L;\Z[\Z])).\]
\end{definition}


We will prove the following lower bound for the 4-genus of a link.

\begin{theorem}\label{theorem:LT-sig-4-ball-genus}
For any $z \in S^1$, $$|\sigma_L(z)| +m-1-\beta(L) \leq 2g_4(L).$$
\end{theorem}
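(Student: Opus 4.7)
The plan is to bound the Levine--Tristram signature by the twisted signature of the exterior of a minimal genus surface collection, and then identify that twisted signature with $\sigma_L(z)$.

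Let $\Sigma = \Sigma_1 \sqcup \cdots \sqcup \Sigma_m \subset D^4$ be a locally flat, oriented collection with $\partial \Sigma_i = L_i$ and $\sum g(\Sigma_i) = g_4(L)$. Set $W := D^4 \sm \nu \Sigma$. First I would establish the topology of $W$: Alexander duality (or a Mayer--Vietoris argument using $D^4 = W \cup \nu \Sigma$) gives $H_1(W;\Z) \cong \Z^m$, freely generated by the meridians of the~$\Sigma_i$, and a standard Euler characteristic calculation yields $\chi(W) = 2g_4(L) - m + 1$. The sum-of-coordinates map $\varepsilon \colon \pi_1(W) \twoheadrightarrow H_1(W;\Z) \to \Z$ restricts along $X_L \hookrightarrow W$ to the homomorphism used to define the Alexander module, so for any $z \in S^1$ the resulting rank-one local system $\C_z$ is compatible with the one on $X_L$.

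Second, I would exploit the twisted intersection form on $H_2(W;\C_z)$. For $z \in S^1 \sm \{1\}$ the local system is nontrivial on the connected space $W$, so $H_0(W;\C_z) = 0$, and $H_4(W;\C_z) = 0$ because $\partial W \neq \emptyset$. The intersection form is Hermitian with radical $N$ equal to the image of $H_2(\partial W;\C_z) \to H_2(W;\C_z)$, and $|\sigma(W,z)| \leq \dim H_2(W;\C_z) - \dim N$. Combining this with the Euler characteristic identity $\dim H_2(W;\C_z) = \chi(W) + \dim H_1(W;\C_z) + \dim H_3(W;\C_z)$ and with twisted Poincar\'e--Lefschetz duality, I would extract the clean bound
\[
|\sigma(W,z)| + m - 1 - \dim_\C H_1(W;\C_z) \leq 2 g_4(L).
\]

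Third, I would identify the two ingredients on the left with $\sigma_L(z)$ and $\beta(L)$ respectively. For the signature, the idea is to compare $W$ with the reference manifold $W_F := D^4 \sm \nu F'$ obtained by pushing a Seifert surface $F$ into $D^4$. A direct handle decomposition of $W_F$ built from $F$ shows that its twisted intersection form is represented by $B(z) = (1-z)V + (1-\bar z) V^T$, so $\sigma(W_F, z) = \sigma_L(z)$. Gluing $W$ to $-W_F$ along the common $X_L$ portion of their boundaries produces a 4-manifold whose remaining boundary is a disjoint union of circle bundles over closed surfaces; by Novikov additivity and a direct calculation that the twisted signature of such bundles vanishes, one obtains $\sigma(W,z) = \sigma_L(z)$. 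For the Betti number, I would apply the long exact sequence of $(W, X_L)$ together with the excision $H_*(W, X_L;\C_z) \cong H_*(\nu \Sigma, \partial_{\mathrm{lat}} \nu \Sigma;\C_z)$ to see that $\dim H_1(W;\C_z) \leq \dim H_1(X_L;\C_z)$, and then use that $\dim H_1(X_L;\C_z) = \beta(L)$ for all but finitely many $z$.

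The main obstacle is the signature identification $\sigma(W,z) = \sigma_L(z)$, which requires showing that the twisted signature of a surface complement in $D^4$ is an invariant of $L$ alone, independent of the choice of $\Sigma$; this is essentially a bordism invariance statement, and will require either a Wall non-additivity computation at the gluing $X_L$ or an appeal to the Atiyah--Patodi--Singer $\rho$-invariant formalism together with a vanishing argument for the signature contribution of the circle-bundle boundary pieces. A secondary subtlety is that $\dim H_1(X_L;\C_z)$ can exceed $\beta(L)$ at finitely many roots of the Alexander polynomial and its torsion factors; here the averaging convention in Definition~\ref{defn:LT-signature} is essential, since a one-sided-limit argument restores the inequality for all $z \in S^1$ from its validity on the generic set.
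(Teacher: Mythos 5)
Your overall strategy — realise $\sigma_L(z)$ as the signature of a twisted intersection form on a $4$-manifold whose boundary carries the data of $L$, and then combine an Euler-characteristic count with a bound on the radical — is the same as the paper's. But the implementations diverge in ways that matter, and the step you flag as your ``main obstacle'' is exactly where the gap lies.

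The paper does not work with $W = D^4 \setminus \nu\Sigma$ directly. It caps $Y_\Sigma := D^4 \setminus \nu\Sigma$ off by attaching $S^1 \times G$ for a disjoint union of handlebodies $G$, producing a $4$-manifold $W_\Sigma$ whose boundary is the \emph{zero-framed surgery manifold} $M_L$, not the mixed boundary $X_L \cup_T (S^1\times\Sigma)$. This is not cosmetic: with $\partial W_\Sigma = M_L$ closed and over $B\Z$, the signature defect $\sigma(\lambda_{W_\Sigma}(z)) - \sigma(W_\Sigma)$ is an honest bordism invariant of $(M_L,\phi)$ because $\Omega_3^{STOP}(B\Z)=0$ and $\Omega_4^{STOP}(B\Z) \cong \Z\oplus\Z_2$, and the identification with $\sigma_L(z)$ is then handed off to Ko's lemma for a pushed-in Seifert surface (the paper's Lemma~\ref{lemma:COT2}). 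In your version, $V := W \cup_{X_L} -W_F$ is a gluing along a manifold \emph{with boundary}, so Novikov additivity does not apply and you are in Wall's non-additivity regime; the Maslov-triple correction term at the tori $\partial X_L$ has to be controlled, and you would also still need to cap off the remaining circle-bundle boundary $S^1\times(\Sigma\cup_\partial F)$ to land on a closed manifold before signatures are even well-defined. Neither step is carried out, and neither is routine. The paper's capping-off route is designed precisely to avoid this.

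Two secondary points. First, the paper uses $\Q(t)$ coefficients uniformly, which makes $\dim H_1(W_\Sigma;\Q(t))$ a fixed integer that is compared to $\beta(L) = \dim H_1(X_L;\Q(t))$ directly, with the evaluation at $z \in S^1$ entering \emph{only} through the signature and handled by the averaging convention. Your route with $\C_z$ local coefficients requires an additional limiting argument to control the jumps in $\dim H_1(W;\C_z)$ at special $z$, which you note but do not resolve; this can be done (compute at transcendental $z$ and pass to one-sided limits), but it is an extra layer the paper sidesteps. Second, your intermediate inequality $|\sigma(W,z)| + m - 1 - \dim_\C H_1(W;\C_z) \leq 2g_4(L)$ is asserted to follow from the Euler characteristic identity and duality, but the arithmetic you would need requires vanishing of $H_0, H_3, H_4$ with twisted coefficients, and a comparison of the radical $N$ with $\dim H_1$; for $W = Y_\Sigma$ with its non-surgered boundary, that bookkeeping is not the same as for $W_\Sigma$ and would need to be redone carefully. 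In short: the topological setup and Euler characteristic computation are right, but both the signature identification and the passage from the raw count to the stated inequality are left as unproven claims, and the paper's capping-plus-bordism machinery is precisely what closes those gaps.
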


Note that $\Delta_L(t) = 0$ if and only if $\beta(L) \neq 0$.
We obtain the following corollary.

\begin{corollary}\label{cor:delta-nonzero-lower-bound}
  Suppose that $\Delta_L(t) \neq 0$.  Then for any $z \in S^1$,
 $$|\sigma_L(z)| +m-1\leq 2g_4(L).$$
\end{corollary}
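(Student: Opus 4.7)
The plan is short: the corollary should follow directly from Theorem~\ref{theorem:LT-sig-4-ball-genus}, once the hypothesis is translated into the vanishing of the nullity $\beta(L)$.

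First I would justify the equivalence asserted without proof in the text: $\Delta_L(t) = 0$ if and only if $\beta(L) \neq 0$. The Alexander module $A := H_1(X_L;\Z[\Z])$ is finitely generated over the Noetherian UFD $\Z[t,t^{-1}]$, and $\Delta_L(t) = \ord_{\Z[\Z]} A$ generates the smallest principal ideal containing the $0$th Fitting ideal of a finite presentation of $A$. A standard commutative-algebra fact then says that $\ord_{\Z[\Z]} A$ vanishes precisely when $A$ fails to be a torsion $\Z[t,t^{-1}]$-module, i.e.\ when $A$ has positive $\Z[t,t^{-1}]$-rank. On the other hand, $\Q(t)$ is the localisation of $\Z[t,t^{-1}]$ at its nonzero elements, so
$$\beta(L) = \dim_{\Q(t)} A \otimes_{\Z[t,t^{-1}]} \Q(t)$$
computes exactly that rank, and the two conditions are equivalent.

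Second, under the hypothesis $\Delta_L(t) \neq 0$ we therefore have $\beta(L) = 0$. Substituting this into the inequality of Theorem~\ref{theorem:LT-sig-4-ball-genus} gives $|\sigma_L(z)| + m - 1 \leq 2g_4(L)$ for every $z \in S^1$, which is the desired conclusion.

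I do not anticipate a main obstacle here: the only non-tautological ingredient is the elementary fact relating the order ideal and the rank of a finitely generated module over $\Z[t,t^{-1}]$. Everything else is a direct substitution into the already-proven theorem.
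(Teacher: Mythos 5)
Your proof is correct and matches the paper's route exactly: the corollary is a direct substitution of $\beta(L)=0$ into Theorem~\ref{theorem:LT-sig-4-ball-genus}, using the equivalence $\Delta_L(t)=0 \Leftrightarrow \beta(L)\neq 0$ (which the paper merely asserts and you justify via flatness of $\Q(t)$ over $\Z[t,t^{-1}]$ and the standard relation between the order ideal and the rank of a finitely generated module). No gap.
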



Here is a discussion of related prior results.
Murasugi~\cite[Theorem~9.1]{Murasugi:1965-1} showed in the smooth case\footnote{As pointed out by Pat Gilmer, the signature can jump at $-1$ for links~\cite{Gilmer-Livingston-2015}.  Murasugi does not use the averaged signature, so his bound $|\sigma(B(-1))|$ can be stronger than $|\sigma_L(-1)|$.  The signature at $-1$ is a link concordance invariant, without averaging, which may not agree with the averaged invariant.} that $|\sigma(B(-1))| \leq 2g_4(L)$.
For $z=-1$, our result is strictly stronger than this unless $\beta(L)$ is the maximal $m-1$.  Murasugi's proof involved counting Morse critical points and does not work for topological locally flat embeddings.
For $z=-1$, Theorem~\ref{theorem:LT-sig-4-ball-genus} is similar to~\cite[Corollary~3.11]{Kauffman-Taylor:1976-1}.
Kauffman and Taylor in~\cite[Corollary~3.11]{Kauffman-Taylor:1976-1} had an extra assumption that the Conway polynomial $\nabla_L(-1) \neq 0$, which in Corollary~\ref{cor:delta-nonzero-lower-bound} we are able to replace with the weaker assumption that the Alexander polynomial is nonzero, since we look at the averaged signature.   Kauffman and Taylor did consider the extension to topological locally flat embeddings, but as mentioned above only for $z=-1$.  In the case of connected surfaces in the $4$-ball, Kauffman extended the results of \cite{Kauffman-Taylor:1976-1} to $z$ any prime power root of unity in \cite[Theorem~4.1]{Kauffman:1978}. For knots and all $z$, the result essentially follows from Taylor~\cite{Taylor-genera-79}, which also deals with high dimensional knots. See also Livingston~\cite[Appendix~A]{Liv-4-genus}.  However Taylor and Livingston state their results for $PL$ locally flat or smooth embeddings, and topological locally flat embeddings are not explicitly dealt with.  A similar result to ours, for links and for all $z$, but in the smooth case only, can be found in Cimasoni-Florens \cite[Theorem~7.2]{Cimasoni-Florens-2008}.
For topologically locally flat embeddings, for all $z$, and in the link case, the result seems, technically speaking, to be new.

The previous paragraph notwithstanding, even if one were only interested in the smooth category, the proof presented here uses less machinery and is substantially different from the previously known proofs of the related results discussed.  We use regular covers instead of branched covers, and bordism theory replaces the $G$-signature theorem.  The author therefore felt that this proof should be recorded.  Perhaps the present version will be more conducive to generalisation.

In \cite{Liv-4-genus}, a stronger lower bound on the 4-genus of a knot, derived from the Levine-Tristram signature function, was defined.
Lower bounds on the 4-genus derived from higher order $L^{(2)}$-signature invariants were introduced and studied in~\cite{Cha:2006-1}.


\subsection{Infection by a string link and four genus}

Fix $m$ points, $p_1,\dots,p_m \in D^2$.  An $m$-component \emph{string link} $L$ is an embedding $L \colon \{p_1,\dots,p_m\} \times I \hookrightarrow D^2 \times I$ such that $(p_i,j) \mapsto (p_i,j)$ for $i=1,\dots m$ and $j=0,1$.

An $r$-multi-disc $\mathbb{E}$ is the standardly oriented disc $D^2$ containing $r$ ordered embedded open discs $D_1,\dots,D_r$.  To perform a infection by a string link, we need an embedding of an $r$-multi-disc $\phi \colon \mathbb{E} \to S^3$ which intersects a link $L$ only in the interiors of the $D_i$.  Denote the image of $\phi$ by $\mathbb{E}_\phi$.   Remove a thickened copy this, $\mathbb{E}_\phi \times I$, from $S^3$ and replace it by a $D^2 \times I$ which contains a string link $J$.  We call the resulting link \emph{infection of $L$ by the string link $J$, along $\mathbb{E}_\phi$,} and denote it by $S(L,J,\mathbb{E}_\phi)$, or sometimes just $S(L,J)$.  The components of the unlink in $S^3$ defined by the closed curves $\eta_1:= \partial \ol{D_1},\dots, \eta_r := \partial \ol{D_r}$ are called the axes.

The following theorem was proved in \cite{CFT09}, making crucial use of results from \cite{FT95II}.

\begin{theorem}[Cochran-Friedl-Teichner]\label{Thm:CFTmain}
Let $D = D_1 \sqcup \dots \sqcup D_m \hookrightarrow D^4$ be slice discs for a link $L$ in $S^3$.  Let $\phi \colon \mathbb{E} \to S^3$ be a map of an $r$-multi-disc such that $\eta_1,\dots,\eta_r$ bound a set of immersed discs $\delta_1,\dots,\delta_r$ in $D^4 \setminus D$ in general position.  Let $c$ be the total number of intersection and self-intersection points amongst the $\delta_i$, and let $J$ be an $r$-component string link whose closure $\widehat{J}$ has vanishing Milnor's $\ol{\mu}$-invariants up to and including length $2c$, $\ol{\mu}_{\widehat{J}}(I) = 0$ for $|I| \leq 2c$.  Then the infection link $S(L,J,\mathbb{E}_\phi)$ of $L$ by the string $J$ along $\mathbb{E}_\phi$ is also slice.
\end{theorem}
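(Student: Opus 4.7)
The plan is to produce slice discs for $S(L,J)$ inside $D^4$ by modifying the given slice discs $D$. The guiding principle is: if the axes $\eta_i$ bounded disjoint, flatly embedded discs $\Delta_i \subset D^4 \setminus D$ (rather than merely immersed ones), then one could perform the infection inside the $4$-ball by excising tubular neighborhoods $\Delta_i \times D^2$ and gluing in the exterior of the string link $J$; the resulting $4$-manifold would still be a $4$-ball, and the image of $D$ would give slice discs for $S(L,J)$. Hence the task reduces to upgrading the $\delta_i$'s, which together carry $c$ double points, to a disjointly embedded family of flat discs in $D^4\setminus D$, possibly after also altering $D$.

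I would attack this in two stages. First, I would resolve each double point of the $\delta_i$'s by the standard surgery trick: puncture one sheet and tube along the intersection point. This produces a disjoint family of embedded orientable surfaces $F_i \subset D^4 \setminus D$ with $\partial F_i = \eta_i$ and with the sum of genera of the $F_i$ bounded by $c$. A symplectic basis of the handles of each $F_i$ then needs to be capped off by (a priori immersed) discs in order to reduce the $F_i$'s back to actual embedded discs.

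Second, I would bring in the infection data. By Milnor's theorem, the vanishing of $\ol{\mu}_{\widehat{J}}(I)$ for all multi-indices $|I| \leq 2c$ means that the meridians of $\widehat{J}$ lie in the $(2c+1)$-st term of the lower central series of $\pi_1(S^3 \setminus \widehat{J})$, and consequently that they bound a capped grope of height proportional to $c$ inside the exterior of $\widehat{J}$. After cutting out a neighborhood of the axes from $D^4$ and gluing in the exterior of $\widehat{J}$ instead, one may stack this grope on top of the handles of the $F_i$'s. The result is a capped grope of sufficient height in the modified ambient $4$-manifold whose attaching circles are precisely the components of $S(L,J)$.

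The final step is to invoke the topological disc embedding theorem of \cite{FT95II} to replace this capped grope by an embedded flat disc system, producing the required slice discs for $S(L,J)$. The main obstacle, and technical heart of the argument, is this last step: one must check that the fundamental group of the modified ambient $4$-manifold is good in the sense of Freedman-Quinn, and must carefully track the numerical relationship between the number $c$ of intersections, the Milnor invariant length $2c$, and the grope height demanded by the disc embedding theorem. The factor of $2$ arises from the classical correspondence between grope height and lower-central-series depth, and this accounting is precisely what makes $2c$ the correct threshold in the hypothesis.
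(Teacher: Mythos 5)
Your proposal takes a genuinely different route from the one in CFT09 (which the paper cites for this theorem), and unfortunately that route has a gap at its crucial final step. The argument in CFT09, which the present paper generalises in Sections~\ref{section:characterisation} and~\ref{section:proof-of-infection-by-string-link-theorem}, does not attempt to directly upgrade the immersed discs $\delta_i$ to embedded ones. Instead it builds a $4$-manifold $N$ by gluing the slice disc exterior to $M_J \times I$, observes that $N$ has a superfluous boundary component $M_J$ and too much $H_2$, then excises a neighbourhood of the $\delta_i$ together with a collar of $M_J$ and replaces it by a $4$-manifold $M_3$ homotopy equivalent to a wedge of $c$ circles (Lemma~\ref{lemma:CFTtechnical_lemma}). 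The resulting $4$-manifold is shown to satisfy a homological and fundamental-group characterisation of slice-disc exteriors (the $g_i=0$ case of Proposition~\ref{Prop:A}), and one then invokes Freedman's work to recognise the $4$-ball.

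The gap in your argument is the last step, the appeal to a disc embedding theorem. You write that one must check that the fundamental group of the modified ambient manifold is good in the sense of Freedman--Quinn, but this check cannot succeed: that fundamental group is (typically) a non-abelian free group, and free groups are \emph{not known} to be good. This is precisely the central difficulty that the Milnor-invariant hypothesis is designed to route around. In the actual argument, what the hypothesis $\ol{\mu}_{\widehat{J}}(I)=0$ for $|I|\le 2c$ buys you is $\pi_1$-\emph{nullity} of the $c$ spheres (coming from the double points) that must be excised in the candidate manifold for $M_3$; $\pi_1$-nullity is a condition on specific maps, not on the fundamental group, and it is what the results of \cite{FT95II} require in order to produce an $s$-cobordant manifold in which surgery on those spheres can be performed. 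Goodness of the free fundamental group plays no role and is not established.

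Two further points. First, the ``stacking'' step is a type mismatch: the gropes furnished by the vanishing Milnor invariants have attaching circles the \emph{meridians} of $\widehat{J}$, which under the identification are the axes $\eta_i$, i.e.\ the boundary circles of your resolved surfaces $F_i$, not their symplectic handle curves; so there is no evident way to assemble them into a single capped grope whose attaching circles are the components of $S(L,J)$. Second, the numerology is not explained by grope height: the correspondence between lower-central-series depth and symmetric grope height is exponential (a height-$h$ grope has class $2^h$), so vanishing Milnor invariants up to length $2c$ would only deliver gropes of height roughly $\log_2 c$. The threshold $2c$ is instead calibrated to produce $\pi_1$-nullity for the $c$ double-point spheres in the candidate for $M_3$, which is an altogether different count.
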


We extend Theorem \ref{Thm:CFTmain} to give the corresponding result for the 4-genus of a link, showing, under some homotopy triviality assumptions, that the 4-genus does not increased under the operation of infection by a string link.

\begin{theorem}\label{Thm:our_main}
Let $\S = \S_1 \sqcup \dots \sqcup \S_m \hookrightarrow D^4$ be oriented embedded surfaces in $D^4$ whose boundary is an ordered link $L$ in $S^3$, with genera $g_1,\dots,g_m$ respectively.  Let $\phi \colon \mathbb{E} \to S^3$ be a map of an $r$-multi--disc such that $\eta_1,\dots,\eta_r$ bound a set of immersed discs $\delta_1,\dots,\delta_r$ in $D^4 \setminus \S$ in general position.  Let $c$ be the total number of intersection and self-intersection points amongst the $\delta_i$, and let $J$ be an $r$-component string link whose closure $\widehat{J}$ has vanishing Milnor's $\ol{\mu}$-invariants up to and including length $2c$, $\ol{\mu}_{\widehat{J}}(I) = 0$ for $|I| \leq 2c$.  Then the infection link $S(L,J,\mathbb{E}_\phi)$ of $L$ by the string link $J$ along $\mathbb{E}_\phi$ is also the boundary of a collection of oriented embedded surfaces in $D^4$ with the same genera.
\end{theorem}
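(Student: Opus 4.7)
The plan is to mirror the Cochran--Friedl--Teichner proof of Theorem~\ref{Thm:CFTmain}, observing that the slice-disc hypothesis on $D$ enters their argument only through the data of the immersed axis disks $\delta_1,\dots,\delta_r$ in the complement of $D$ together with the intersection count $c$ amongst the $\delta_i$. Theorem~\ref{Thm:our_main} supplies precisely this package with $\Sigma$ in place of $D$, so one expects the argument to transfer almost verbatim.

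First I would construct a 4-manifold $N$ that realises the infection geometrically. Start with $D^4$, remove a thickening of $\mathbb{E}_\phi$ (pushed slightly into the interior), and glue in $E_{\widehat{J}} \times I$, where $E_{\widehat{J}} \subset S^3$ denotes the exterior of the closure $\widehat{J}$ of the string link $J$. The gluing is along tori neighbourhoods of the axes $\eta_i$, chosen so that $\partial N = S^3$ contains the infection link $S(L,J,\mathbb{E}_\phi)$. The original surfaces $\Sigma \subset D^4 \subset N$ extend through the added piece via the obvious product-annulus cobordisms $L_i \times I$, producing oriented embedded surfaces $\Sigma'_1,\dots,\Sigma'_m \subset N$ bounding the infection link, with $g(\Sigma'_i)=g_i$.

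The heart of the proof is to show that $N$ is homeomorphic to $D^4$, after which the $\Sigma'_i$ may be re-viewed as surfaces in $D^4$ realising the required genera. Here I would quote the topological disc embedding machinery of Freedman--Teichner \cite{FT95II} exactly as in \cite{CFT09}: the immersed disks $\delta_i$ give null-homotopies of the axes in $D^4 \setminus \Sigma \subset N$, and the vanishing of $\ol{\mu}_{\widehat{J}}(I)$ for $|I|\leq 2c$ is precisely what is needed to promote these null-homotopies, after tubing into the $\delta_i$ and controlled grope constructions, to disjoint flat embedded disks whose framed surgery identifies $N$ with $D^4$. The step I expect to require the most care, and the main obstacle, is verifying that this FT input is insensitive to replacing slice discs by surfaces of positive genera: one must check that the fundamental-group data controlling the embedding theorem is measured by the Milnor invariants of $\widehat{J}$ and by the intersection count $c$ rather than by $\pi_1$ of the ambient complement, so that the genera $g_i$ enter only through the product cobordisms $L_i \times I$ and not through the FT argument itself.
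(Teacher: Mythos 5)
Your plan identifies the right inputs (the immersed discs $\delta_i$ in the surface complement and the Milnor-invariant hypothesis) but the structure of the argument you propose does not match how those inputs can actually be used, and the central step does not go through as stated. The paper does not build a $4$-manifold $N$ with $\partial N = S^3$ and then show it is $D^4$. Instead, it first proves a characterisation (Proposition~\ref{Prop:A}) of links bounding surfaces of genera $(g_1,\dots,g_m)$ in terms of the auxiliary closed $3$-manifold $M_L^\S$, reducing everything to the existence of a $4$-manifold with $\partial = M_S^\S$ satisfying purely homological and $\pi_1$ conditions. The $4$-manifold actually constructed, $N := Y_\S \cup_{E_\phi \times I = \mathbb{H}} M_J \times [0,1]$ with $Y_\S = D^4 \setminus \nu\S$, has \emph{two} boundary components, $M_S^\S \sqcup -M_J$; the extra $-M_J$ boundary and the associated $H_2$ are the real obstacle, and one caps it off by excising a neighbourhood $M_1$ of the $\delta_k$ together with a collar of $M_J$ and gluing in the CFT manifold $M_3$. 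Your sketch, by contrast, tries to produce $\partial N = S^3$ directly by excising a thickened $\mathbb{E}_\phi$ from the interior of $D^4$ and gluing in $E_{\widehat J}\times I$; this does not parse as written (removing something from the interior does not change $\partial D^4 = S^3$, nor does the claimed boundary identification with the infection link come for free), and it hides the genuinely problematic boundary component $M_J$ that the paper has to deal with.

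The more serious gap is the claim that the Freedman--Teichner input yields ``disjoint flat embedded disks whose framed surgery identifies $N$ with $D^4$.'' That is not what \cite{FT95II} or \cite{CFT09} provide. The relevant fundamental group is free (generated by the double point loops of the $\delta_k$), and free groups are not known to satisfy the $\pi_1$-null disc lemma, so one cannot surger to embedded discs directly. What Lemma~\ref{lemma:CFTtechnical_lemma} (= CFT Lemma~2.7) gives is a replacement $4$-manifold $M_3$ with $\partial M_3 \cong \partial^+ M_1$, homotopy equivalent to $\vee_c S^1$, obtained by an $s$-cobordism argument under the $\pi_1$-nullity hypothesis ensured by the vanishing of $\ol\mu_{\widehat J}(I)$ for $|I|\le 2c$; one then \emph{glues $M_3$ in} as a cap to form $N'$, rather than performing surgery on embedded spheres or discs. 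Finally, the homeomorphism to $D^4$ is not produced by FT at all: it appears only inside the proof of Proposition~\ref{Prop:A}, where one glues $\S \times D^2$ back onto $N'$, checks $\pi_1 = 1$ and $\wt H_* = 0$ using conditions (\ref{item:prop-A-i})--(\ref{item:prop-A-iii}), and applies Freedman's topological $h$-cobordism theorem. So the genera enter through verifying those homological conditions (Propositions~\ref{Propn:Properties_of_N} and \ref{prop:N-satisties-properties}), not through the disc-embedding step you single out, and the step you describe as ``the heart of the proof'' is not available in the form you invoke.
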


This has the following corollary when restricted to the case of a single satellite.

\begin{corollary}
Let $\S = \S_1 \sqcup \dots \sqcup \S_m \hookrightarrow D^4$ be embedded oriented surfaces in $D^4$ whose boundary is an ordered link $L$ in $S^3$, with genera $g_1,\dots,g_m$ respectively.   Let $\eta$ be a closed curve in $S^3 \setminus L$, which is unknotted in $S^3$ and such that $\eta$ is trivial in $\pi_1(D^4 \setminus \S)$.  Then the satellite link $S(L,J,\eta)$ of $L$ with companion $J$ and axis $\eta$ is also the boundary of a collection of embedded oriented surfaces in $D^4$ with the same genera, for any knot $J$.
\end{corollary}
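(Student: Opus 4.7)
The plan is to deduce the corollary directly from Theorem \ref{Thm:our_main} applied in the case $r=1$, where the single axis curve is $\eta$ and the companion string link is the 1-string link whose closure is the knot $J$. The first step is to construct the required 1-multi-disc embedding $\phi\colon \mathbb{E} \to S^3$. Since $\eta$ is unknotted in $S^3$, it bounds an embedded disc $D'\subset S^3$, which can be taken as the image of the inner sub-disc $D_1$. Thickening $\eta$ slightly outward into an annular collar that lies in $S^3 \setminus L$ (using that $\eta \subset S^3 \setminus L$) provides the rest of the multi-disc and arranges that $\mathbb{E}_\phi \cap L$ is contained in the interior of $D_1$. The infection link $S(L,J,\mathbb{E}_\phi)$ obtained this way is precisely the usual satellite $S(L,J,\eta)$.

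Next, I would produce the immersed disc $\delta_1$ bounded by $\eta_1 = \eta$ in $D^4 \setminus \S$. The hypothesis that $\eta$ is trivial in $\pi_1(D^4 \setminus \S)$ gives a continuous extension of $\eta$ to a map $D^2 \to D^4 \setminus \S$; standard general position moves make this an immersion with finitely many transverse self-intersection points, yielding some finite $c$.

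The only remaining point is to check the Milnor invariant hypothesis in Theorem \ref{Thm:our_main}. Here $\widehat{J}$ is the closure of the 1-string link $J$, namely the knot $J$ itself. The point I expect to be the main (though minor) obstacle is simply to cite or recall that Milnor's $\ol{\mu}$-invariants of a knot vanish for all multi-indices, as a consequence of the longitude of a knot lying in every term of the lower central series of the knot group (via Stallings' theorem and the fact that $H_1$ is cyclic, so the longitude is null-homologous and all Magnus expansion coefficients detecting $\ol{\mu}$ must vanish). Thus the condition $\ol{\mu}_{\widehat{J}}(I) = 0$ for $|I| \leq 2c$ is automatic, regardless of $c$, and Theorem \ref{Thm:our_main} delivers a collection of embedded oriented surfaces in $D^4$ bounded by $S(L,J,\eta)$ whose genera agree with those of $\S_1,\dots,\S_m$.
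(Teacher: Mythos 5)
Your proof is correct and takes the same approach the paper intends: the corollary is precisely the $r=1$ case of Theorem~\ref{Thm:our_main}, and your observation that every Milnor $\ol{\mu}$-invariant of a knot vanishes (so the hypothesis $\ol{\mu}_{\widehat{J}}(I)=0$ for $|I|\le 2c$ holds automatically regardless of $c$) is the key point. The paper states this as an immediate corollary without proof, and your filling in of the multi-disc construction and the Milnor-invariant vacuity is exactly the intended specialisation.
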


\subsection{Organisation of the paper}

In Section \ref{section:example} we construct examples of knots for which the upper and lower bounds provided by the two theorems above enable us to compute the 4-genus, and for which other methods for creating an upper bound do not seem to work.  In particular we compare Theorem~\ref{Thm:our_main} with recent results on the topological 4-genus of~\cite{Feller-2015},~\cite{Feller-McCoy:2015} and~\cite{BFLL:2015}.

The proof of Theorem~\ref{theorem:LT-sig-4-ball-genus} is given in Sections \ref{section:bordism-and-witt-groups}, \ref{section:construction-of-4-manifolds} and \ref{section:proof-of-LT-sign-thm}.
Then the proof of Theorem~\ref{Thm:our_main} is given in Sections~\ref{section:characterisation} and \ref{section:proof-of-infection-by-string-link-theorem}.

\subsection*{Dedication}

This paper was written for a special memorial edition of the Journal of Knot Theory and its Ramifications, in honour of Tim Cochran.  Like much of my mathematical output, many of the main ideas in the paper have roots in Tim's papers and the ideas contained in them.  To me, Tim was a great mathematician, one of my r\^{o}le models, he was passionate about sharing his vision, and above all he was a generous and thoughtful person.

\section{Combining the upper and lower bounds to compute new 4-ball genera of knots}\label{section:example}

As an application of the above two theorems, we construct knots whose 4-genus we are now able to compute.
Start with the knot $K$ shown in Figure~\ref{figure:4-genus-example-1}, whose 4-genus is four.
The knot $K$ is constructed from a connect sum of the torus knot $T_{3,5}$ and the ribbon knot $11n139$.

\begin{figure}[h]
\begin{center}
\begin{tikzpicture}
\node[anchor=south west,inner sep=0] at (0,0){\includegraphics[scale=0.12]{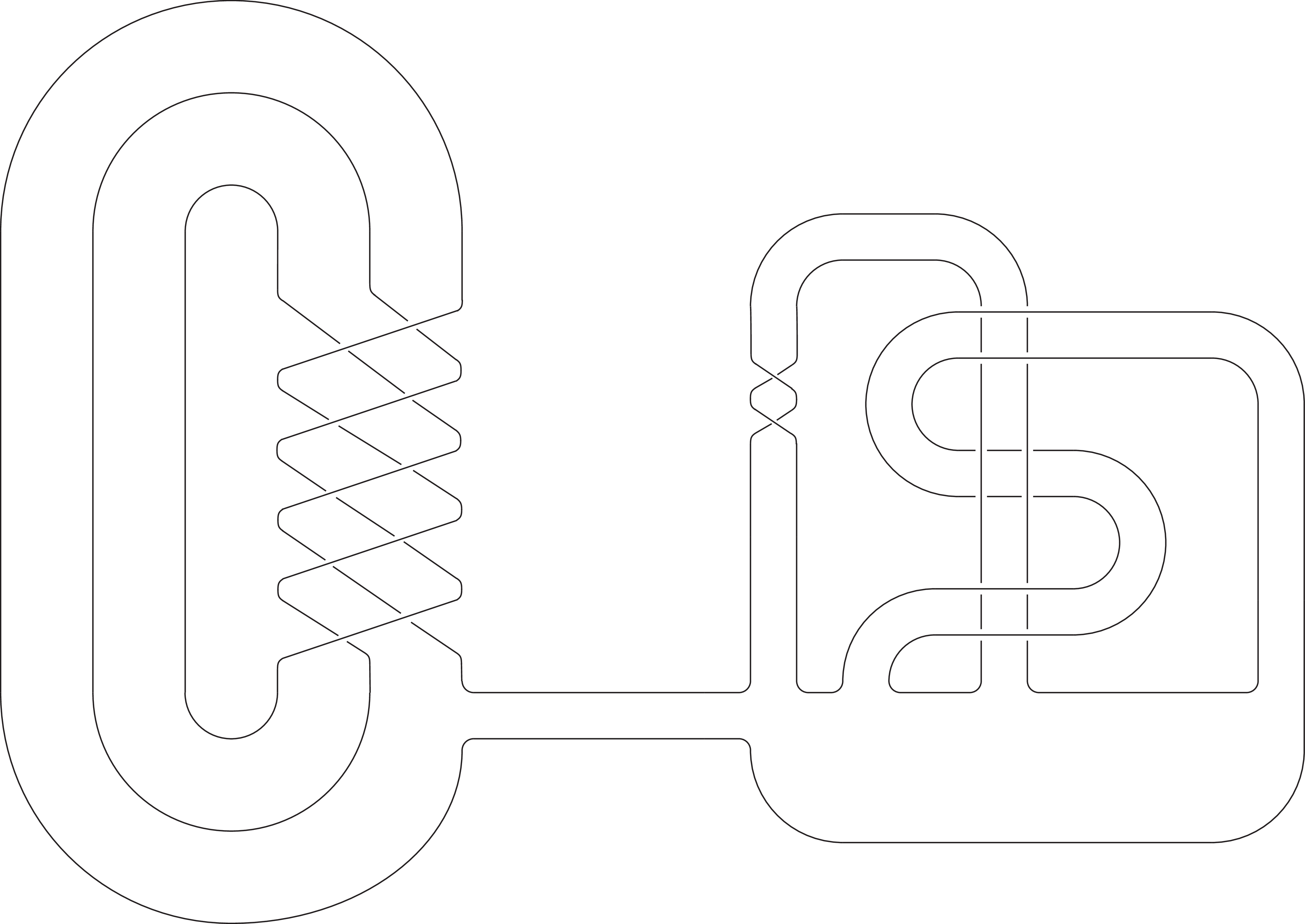}};
\end{tikzpicture}
\end{center}
    \caption{The connect sum $K=T(3,5) \# 11n139$.}
    \label{figure:4-genus-example-1}
\end{figure}

Then consider the two curves $(\eta_1,\eta_2)$ shown in Figure~\ref{figure:4-genus-example-3}.  In the figure, each box indicates a number of full right-handed twists in each of the parallel strands that pass through the box.  Thus we have infection data for each choice of integers $k$,$\ell$, $n$ and $m$.   Note that $(\eta_1,\eta_2)$ is a 2-component unlink.

\begin{figure}[h]
\begin{center}
\begin{tikzpicture}
\node[anchor=south west,inner sep=0] at (0,0){\includegraphics[scale=0.18]{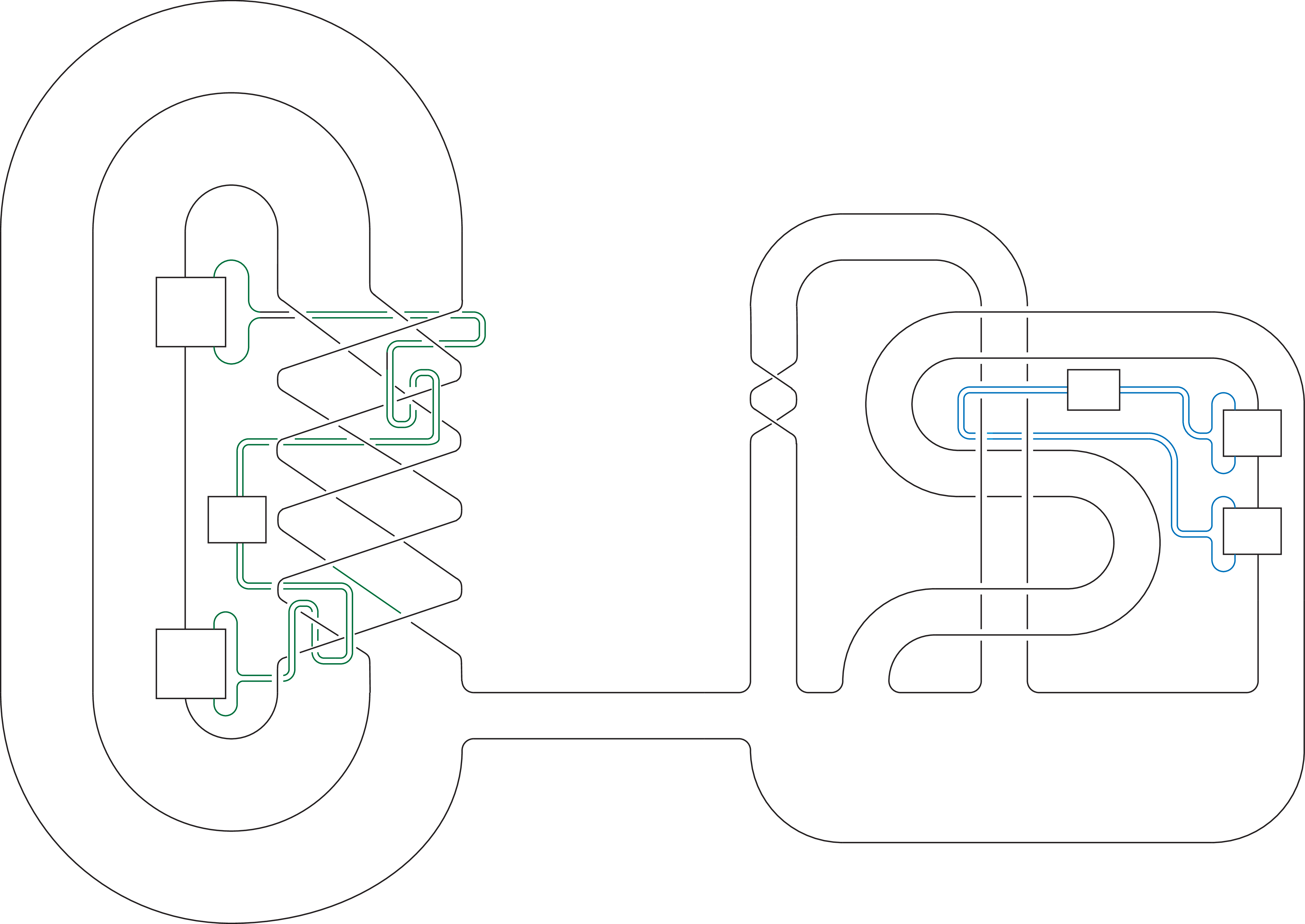}};
\node at (1.9,6.07)  {$k$};
\node at (1.87,2.57)  {$-k$};
\node at (12.42,4.87)  {$\ell$};
\node at (12.37,3.87)  {$-\ell$};
\node at (10.85,5.3)  {$n$};
\node at (2.38,4)  {$m$};
\node at (5.15,6)  {$\eta_1$};
\node at (12,3.25)  {$\eta_2$};

\end{tikzpicture}
\end{center}
    \caption{The knot $K$ together with the infection curves $\eta_1$ and $\eta_2$.}
    \label{figure:4-genus-example-3}
\end{figure}

Perform infection on $K$ by a string link $J$ using the curves $(\eta_1,\eta_2)$ as data for the infection.  To define an embedding of a multi-disc, one also needs to choose a path between the $\eta$ curves that misses the disjointly embedded discs they bound and $K$.  The resulting satellite knot also depends on this choice.  Choose $J$ so that $\ol{\mu}_J(I) = 0$ for all multi-indices $I$ with  $|I| \leq 2(n+m)$. We could use one of Milnor's links from \cite[Figure~1,~p.~301]{Milnor:1957-1} for $J$.  Denote the resulting link by $S(K,J)$.  We omit the multi-disc from the notation.

\begin{proposition}\label{prop:4-genus-S(K,J)-equal-two}
  The knot $S(K,J)$ has 4-genus equal to four.
\end{proposition}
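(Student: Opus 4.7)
The plan is to prove $g_4(S(K,J)) = 4$ by establishing matching upper and lower bounds, one from each of the two main theorems of the paper.

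For the upper bound $g_4(S(K,J)) \leq 4$, I apply Theorem~\ref{Thm:our_main}. Since $T(3,5)$ has Seifert genus four, its genus-four Seifert surface in $S^3$ pushes slightly into $D^4$ rel boundary; since $11n139$ is ribbon, it bounds a slice disc in $D^4$. Boundary connect sum produces an oriented, locally flat embedded surface $\Sigma \subset D^4$ with $\partial \Sigma = K$ and $g(\Sigma) = 4$, realising $g_4(K) = 4$. Next, I check from Figure~\ref{figure:4-genus-example-3} that $\eta_1$ and $\eta_2$ bound immersed discs $\delta_1, \delta_2$ in $D^4 \setminus \Sigma$: the cancelling twist pairs $\pm k$ and $\pm \ell$ are arranged precisely so that each $\eta_i$ is null-homotopic in $S^3 \setminus K$, hence null-homotopic in $D^4 \setminus \Sigma$, and the null-homotopies can be promoted to immersed discs in general position. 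I then count the total number $c$ of intersection and self-intersection points of the $\delta_i$; by construction the parameters $n$ and $m$ are the sources of these intersections and one checks $c \leq n+m$. Since $J$ was chosen with $\ol{\mu}_{\widehat{J}}(I) = 0$ for $|I| \leq 2(n+m)$, this certainly implies $\ol{\mu}_{\widehat{J}}(I) = 0$ for $|I| \leq 2c$, and Theorem~\ref{Thm:our_main} delivers a genus-four surface in $D^4$ bounded by $S(K,J)$.

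For the lower bound $g_4(S(K,J)) \geq 4$, I apply Theorem~\ref{theorem:LT-sig-4-ball-genus}. Since $S(K,J)$ is a knot we have $m = 1$ and $\beta(S(K,J)) = 0$, so the bound reads $|\sigma_{S(K,J)}(z)| \leq 2 g_4(S(K,J))$ for every $z \in S^1$. The key observation is that both axes $\eta_1$ and $\eta_2$ have linking number zero with $K$ (visible from the figure because of the paired twists $\pm k$, $\pm \ell$). The Litherland-type formula for Levine-Tristram signatures under string-link infection then contributes $\sigma_{\widehat{J}_i}(z^{w_i})$ at each axis, with winding numbers $w_i = 0$, and since $B(1) = 0$ the averaged signature $\sigma_{\widehat{J}}(1) = 0$, so $\sigma_{S(K,J)}(z) = \sigma_K(z)$ for all $z \in S^1$. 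Levine--Tristram signatures are additive under connect sum, so $\sigma_K(-1) = \sigma_{T(3,5)}(-1) + \sigma_{11n139}(-1) = -8 + 0 = -8$, using the standard value for the torus knot and that $11n139$ is ribbon. Thus $|\sigma_{S(K,J)}(-1)| = 8$, giving $g_4(S(K,J)) \geq 4$.

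The principal obstacle is the geometric step in the upper bound: constructing the immersed discs $\delta_1, \delta_2$ explicitly from the data of Figure~\ref{figure:4-genus-example-3} and rigorously bounding the intersection count $c$ by $n+m$, so that the hypothesis on Milnor's invariants of $J$ is sufficient to invoke Theorem~\ref{Thm:our_main}. A secondary subtlety is justifying the invariance of the Levine--Tristram signature under string-link infection with zero-linking axes; although this is expected from the satellite formula, one should verify it either directly using the Seifert form of the infected knot or by carefully citing the appropriate extension of Litherland's formula to the string-link setting.
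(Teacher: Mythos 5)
Your overall strategy matches the paper's (combine Theorem~\ref{theorem:LT-sig-4-ball-genus} for the lower bound with Theorem~\ref{Thm:our_main} for the upper bound), but the details of both halves diverge, and the upper bound contains a genuine gap.

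For the lower bound, you invoke a Litherland-type satellite formula to argue $\sigma_{S(K,J)}(z) = \sigma_K(z)$. The paper avoids this machinery: it observes that $\eta_1,\eta_2$ lie in the commutator subgroup $\pi_1(X_K)^{(1)}$, so one can find a Seifert surface for $K$ disjoint from $\eta_1,\eta_2$ whose Seifert form is literally unchanged by the infection, and hence all Levine-Tristram signatures of $S(K,J)$ agree with those of $K$. That argument is more elementary and sidesteps the citation concern you yourself flag at the end.

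The real problem is in the upper bound. You assert that ``each $\eta_i$ is null-homotopic in $S^3 \setminus K$, hence null-homotopic in $D^4 \setminus \Sigma$''. The cancelling twist pairs $\pm k$, $\pm\ell$ make the $\eta_i$ null-\emph{homologous} in $X_K$ (linking number zero, indeed lying in the commutator subgroup), but that is far weaker than null-homotopic in $X_K$, and there is no reason for the latter to hold --- if it did, the intersection count $c$ would be controllable entirely in $S^3$, which is not what the configuration in Figure~\ref{figure:4-genus-example-3} gives. Moreover your choice of $\Sigma$ (a pushed-in Seifert surface for $T(3,5)$ boundary-connect-summed with a ribbon disc for $11n139$) makes matters worse: $D^4 \setminus \nu\Sigma$ for a pushed-in fibre surface has free fundamental group of rank $2g$, and the $\eta_i$ being in $\pi_1(X_K)^{(1)}$ gives no control over their image there. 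The paper sidesteps this by building the genus-four surface via the eleven band moves of Figure~\ref{figure:4-genus-example-2}, producing a $4$-component unlink; the $\eta_i$ are then \emph{seen} to be null-homotopic in the complement of that unlink (equivalently, in $D^4 \setminus \Sigma$ for that particular $\Sigma$), with null-homotopies having exactly $n+m$ double points coming from undoing the twist boxes labelled $n$ and $m$. Without specifying a surface for which you can actually exhibit the discs $\delta_1,\delta_2$ and count $c$, the hypotheses of Theorem~\ref{Thm:our_main} are not verified, and the upper bound is not established.
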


\begin{proof}
  The original seed knot $K$ has Levine-Tristram signature $|\sigma_K(-1)| = 8$, arising from the $T_{3,5}$ summand, so the 4-genus of $K$ is at least four by Theorem~\ref{theorem:LT-sig-4-ball-genus}.  Indeed the knot $K$ has 4-genus four, since eleven band moves, as indicated in Figure~\ref{figure:4-genus-example-2}, produce a $4$-component unlink.

\begin{figure}[h]
\begin{center}
\begin{tikzpicture}
\node[anchor=south west,inner sep=0] at (0,0){\includegraphics[scale=0.15]{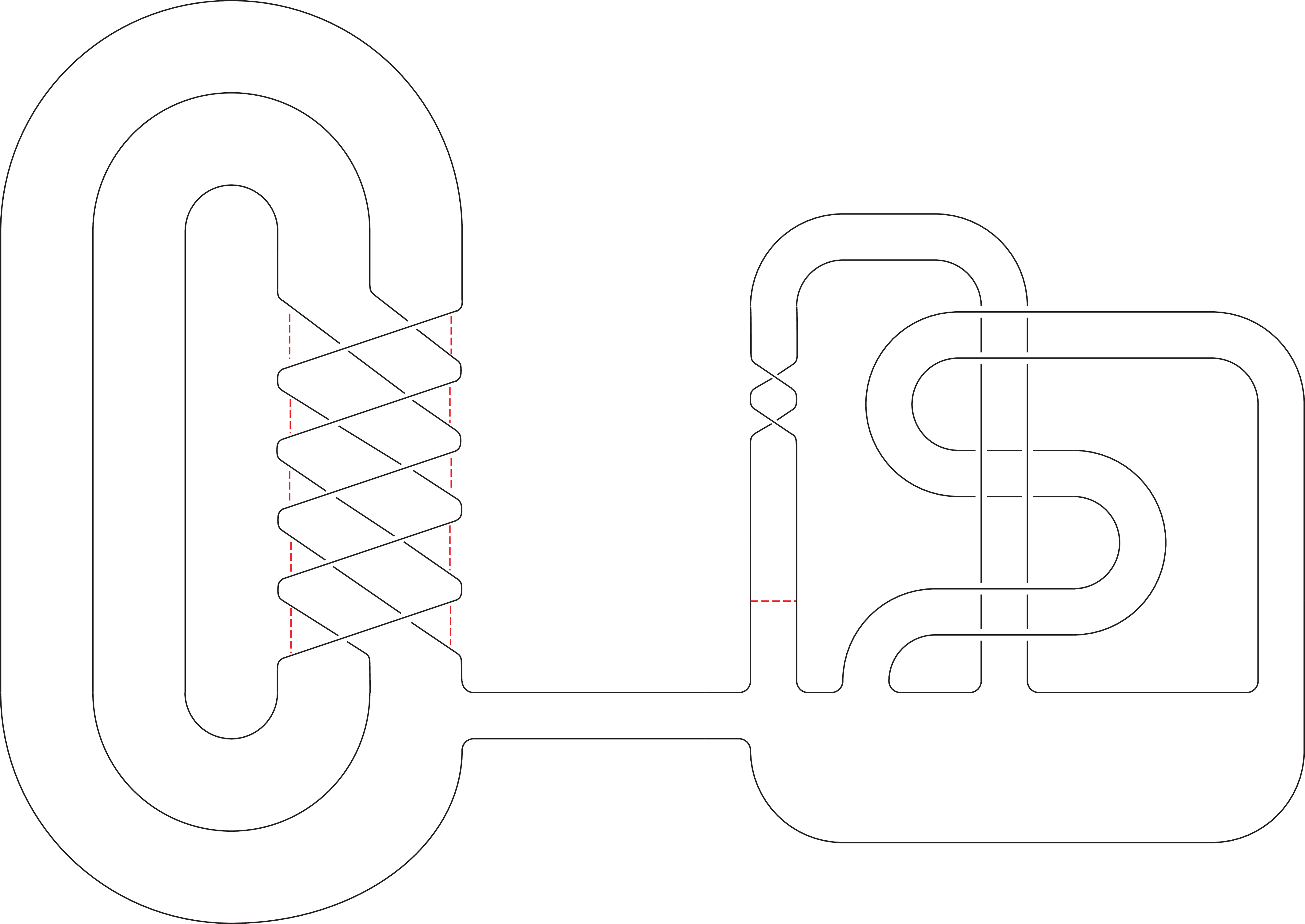}};
\end{tikzpicture}
\end{center}
    \caption{The connect sum $K= T(3,5) \# 11n139$ with places to perform eleven band moves indicated, after which one is left with a 4-component unlink.}
    \label{figure:4-genus-example-2}
\end{figure}

  To see this, observe that the band moves give rise to a smoothly embedded surface $\Sigma$ in the 4-ball with boundary $K$ and euler characteristic $4-11=-7$.  Thus the genus $g(\Sigma)$ satisfies $1-2g(\S)=-7$ and so $g(\Sigma)=4$.  The curves $(\eta_1,\eta_2)$ lie in the the commutator subgroup $\pi_1(X_K)^{(1)}$, where $X_K:=S^3 \sm \nu K$ is the knot exterior.  This implies that there is a Seifert surface whose Seifert form is not altered by the infection operation. Therefore the Alexander polynomial and the Levine-Tristram signatures are unchanged, so $g_4(S(K,J)) \geq 4$ by Theorem~\ref{theorem:LT-sig-4-ball-genus}.  Finally, the string link construction satisfies the hypotheses of Theorem~\ref{Thm:our_main}: $\eta_1$ and $\eta_2$ can be seen to be null-homotopic in the complement of the $4$-component unlink that arises after the band moves of Figure~\ref{figure:4-genus-example-2} have been performed.  The null-homotopies have $n+m$ double points, arising from undoing the twists in the boxes labelled $n$ and $m$, and we chose $J$ to have all $\ol{\mu}_J(I) = 0$ for $|I| \leq 2(n+m)$.  Thus by Theorem~\ref{Thm:our_main} the infection operation does not increase the 4-genus, so we have $g_4(S(K,J)) \leq 4$.  This completes the proof of Proposition~\ref{prop:4-genus-S(K,J)-equal-two}.
\end{proof}

We briefly discuss some other potential approaches to computing the 4-genus of $S(K,J)$.

\begin{enumerate}
  \item   The Alexander polynomial is $$\Delta_K = \Delta_{T(3,5)} \cdot \Delta_{11n139} = (t^8-t^7+t^5-t^4+t^3-t+1)(2t^2-5t+2).$$ Due to the contribution from $\Delta_{T(3,5)}$, this not satisfy the Fox-Milnor condition of factorising in the form $\pm t^kf(t)f(t^{-1})$.  Thus $S(K,J)$ is not slice and the 4-genus of $S(K,J)$ is at least one.
 \item    To show that the 4-genus at least four, one needs Theorem~\ref{theorem:LT-sig-4-ball-genus}.  Of course, since we use the Levine-Tristram signature at $-1$, this already follows from~\cite{Kauffman-Taylor:1976-1}.
  \item Knot concordance invariants associated to Khovanov and Heegaard Floer homology and gauge theory fail to give any information for locally flat embeddings.
  \item Feller~\cite{Feller-2015} showed that the width of the Alexander polynomial is an upper bound for twice the 4-genus of a knot.  The Alexander polynomial of $S(K,J)$ has width $10$, so only gives an upper bound of $5$ for the 4-genus.  
\item  Provided $k,\ell \neq 0$ and $J$ is suitably non-trivial, for example if $J$ has a nonvanishing Milnor's invariant, the knot $S(K,J)$ does not seem to be obviously concordant to a knot with Alexander polynomial of width 8.  However, one can find a genus $4$ smooth cobordism in $S^3 \times I$ to a link that can be sliced using Theorem~\ref{Thm:CFTmain}.  So to use the full extra power of Theorem~\ref{Thm:our_main} we would need a more sophisticated example.
\item  The infection curves $(\eta_1,\eta_2)$ intersect the obvious minimal genus Seifert surface for $K$, therefore only by stabilising this Seifert surface can we easily understand a Seifert surface for $S(K,J)$.  Lukas Lewark informs me that one can  apply the techniques of \cite{BFLL:2015} to reduce this stabilised Seifert surface, pushed into the 4-ball, to a genus 4 surface.  It would be interesting to find some examples where the approach of \cite{Feller-2015}, \cite{BFLL:2015} of excising a part of the Seifert surface with Alexander polynomial one, fails to find a sharp upper bound, but Theorem~\ref{Thm:our_main} does.
\end{enumerate}

\section{Bordism groups and Witt groups}\label{section:bordism-and-witt-groups}

In this section we begin the proof of Theorem~\ref{theorem:LT-sig-4-ball-genus}, which will take until the end of Section~\ref{section:proof-of-LT-sign-thm}.

The Atiyah-Hirzebruch spectral sequence~\cite{AHSS-paper}, together with the facts that $\Omega^{STOP}_n= 0$ for $n=1,2,3$, implies that $\Omega_3^{STOP}(B\Z) =0$ and $\Omega_4^{STOP}(B\Z) \cong \Omega_4^{STOP}$.  We then have $\Omega_4^{STOP} \cong \Z \oplus \Z_2$, with the summands detected by the signature and the Kirby-Siebenmann invariant $ks$ respectively; this is described in~\cite[Section~5.1]{teichnerthesis}.  The Atiyah-Hirzebruch sequence uses topological transversality~\cite[Theorem~9.5]{Freedman-Quinn:1990-1}.

Let $(M,\phi)$ be a connected closed oriented 3-manifold with a map $\phi \colon M \to B\Z$.  Then since $\Omega_3^{STOP}(B\Z) =0$, $M$ is the boundary of a connected topological $4$-manifold $W$ with a map $\Phi \colon W \to B\Z$ extending $\phi$.   We may consider the twisted homology groups $H_*(M;\Q(t))$ and $H_*(W;\Q(t))$, corresponding to the maps $\pi_1(M) \to \Z$ and $\pi_1(W) \to \Z$ induced by $\phi$ and $\Phi$ respectively.  We also will consider the intersection form $\lambda_W \colon H_2(W;\Q(t)) \times H_2(W;\Q(t)) \to \Q(t)$, which is a hermitian sesquilinear form.

Since $H_i(M;\Q(t))$ may be nonzero, in particular for $i=1,2$, the form $\lambda_W$ may be singular.
To obtain a nonsingular form, instead of $\lambda_W$ we consider the restricted intersection form.  Define
\[H_2(W)^{\dag} := H_2(W;\Q(t))/\im(H_2(M;\Q(t))).\]
The intersection form of $W$, $\lambda_W \colon H_2(W;\Q(t)) \times H_2(W;\Q(t)) \to \Q(t)$,
induces a hermitian, sesquilinear form
\[\lambda^{\dag}_W \colon H_2(W)^\dag \times H_2(W)^\dag \to \Q(t).\]
This is well-defined by the next lemma.
We will use this restricted intersection form to define signature invariants of $(M,\phi)$.


\begin{lemma}\label{lemma:lambda-dag-nonsingular}
  The intersection form $\lambda^\dag \colon H_2(W)^\dag \times H_2(W)^\dag \to \Q(t)$ is well-defined and nonsingular.
\end{lemma}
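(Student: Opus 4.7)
The plan is to use three standard inputs over the field $\Q(t)$: (i) Poincar\'e--Lefschetz duality for the topological $4$-manifold $W$ with boundary $M$, giving an isomorphism $\PD \colon H_2(W,M;\Q(t)) \xrightarrow{\cong} H^2(W;\Q(t))$; (ii) the universal coefficient theorem, which over the field $\Q(t)$ yields an isomorphism $\kappa \colon H^2(W;\Q(t)) \xrightarrow{\cong} \Hom_{\Q(t)}(H_2(W;\Q(t)),\Q(t))$ since all $\Ext$-terms vanish; and (iii) the long exact sequence of the pair $(W,M)$ with $\Q(t)$-coefficients. Under these identifications the intersection form is the composition
\[
\widehat{\lambda} \colon H_2(W;\Q(t)) \xrightarrow{j_*} H_2(W,M;\Q(t)) \xrightarrow{\PD} H^2(W;\Q(t)) \xrightarrow{\kappa} \Hom_{\Q(t)}(H_2(W;\Q(t)),\Q(t)),
\]
with $\lambda_W(a,b) = \widehat{\lambda}(a)(b)$.

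Well-definedness of $\lambda^\dag$ on the quotient $H_2(W)^\dag$ is then immediate: if $a = i_*(x)$ for some $x \in H_2(M;\Q(t))$, then exactness gives $j_*(a)=0$, hence $\lambda_W(a,b)=0$ for every $b$; hermitian symmetry handles the other slot.

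For nonsingularity, since $\PD$ and $\kappa$ are isomorphisms one has $\ker\widehat{\lambda} = \ker j_* = \im i_*$ by exactness, and so $\widehat{\lambda}$ descends to an injection $\overline{\lambda} \colon H_2(W)^\dag \hookrightarrow \Hom_{\Q(t)}(H_2(W;\Q(t)),\Q(t))$. The previous paragraph shows that the image of $\overline{\lambda}$ lies in the annihilator of $\im(i_*)$, which over the field $\Q(t)$ is canonically identified with $\Hom_{\Q(t)}(H_2(W)^\dag,\Q(t))$ by dualising the short exact sequence $0 \to \im(i_*) \to H_2(W;\Q(t)) \to H_2(W)^\dag \to 0$. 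Thus the adjoint of $\lambda^\dag$ is an injective $\Q(t)$-linear map $H_2(W)^\dag \hookrightarrow \Hom_{\Q(t)}(H_2(W)^\dag,\Q(t))$.

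To finish, I would note that $W$ is a compact topological $4$-manifold and so has the homotopy type of a finite CW complex, whence $H_2(W;\Z[t,t^{-1}])$ is finitely generated over $\Z[t,t^{-1}]$ and $H_2(W)^\dag$ is a finite-dimensional $\Q(t)$-vector space. An injective linear map between finite-dimensional vector spaces of equal dimension is an isomorphism, so $\lambda^\dag$ is nonsingular. The only mild obstacle is to pin down the correct form of Poincar\'e--Lefschetz duality in the topological category with twisted $\Q(t)$-coefficients; once that is in hand, the rest is clean homological algebra, made transparent by the fact that $\Q(t)$ is a field.
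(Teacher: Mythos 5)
Your proof is correct and follows essentially the same route as the paper: both use the long exact sequence of the pair $(W,M)$ with $\Q(t)$-coefficients, Poincar\'e--Lefschetz duality, and the universal coefficient theorem (with $\Ext$ vanishing over the field $\Q(t)$) to identify $\ker\lambda_W$ with $\im(i_*)$, show the form descends to the quotient, and deduce nonsingularity by an injectivity-plus-equal-dimension argument. Your explicit invocation of the annihilator of $\im(i_*)$ and of compactness to justify finite-dimensionality spells out steps the paper treats more tersely, but the underlying argument is the same.
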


\begin{proof}
  Consider the following commuting diagram. In the diagram and in the rest of the proof, all homology and cohomology groups are with $\Q(t)$ coefficients.  For a $\Q(t)$-module $P$, let $P^*:= \Hom_{\Q(t)}(P;\Q(t))$.
  \[\xymatrix @C-0.5cm{
  H_3(W,M) \ar[r] \ar[d]^-{\cong} & H_2(M) \ar[r]^-{i_*} \ar[d]^-{\cong} & H_2(W) \ar[r] \ar[dr]^{\wt{\lambda}} \ar[d]^-{\cong} & H_2(W,M) \ar[r] \ar[d]^-{\cong} & H_1(M)  \ar[d]^-{\cong} 
  \\
  H^1(W) \ar[r]  & H^1(M) \ar[r] & H^2(W,M) \ar[r]  & H^2(W) \ar[r] \ar[d]_-{\kappa}^-{\cong}  & H^2(M) \ar[d]_-{\kappa}^-{\cong}
  \\
   & & & H_2(W)^* \ar[r]^{i^*} & H_2(M)^* 
   }\]
  The vertical maps are isomorphisms.  The top vertical maps are given by Poincar\'{e} duality, and the lower vertical maps are the Kronecker evaluation maps $\kappa$ from the universal coefficient theorem. Define the map $\wt{\lambda}$ via the diagram. The adjoint of the intersection pairing, which we also denote by $\lambda_W$, is given by
  \[\lambda_W = \kappa \circ \wt{\lambda} \colon H_2(W) \to H_2(W)^*.\]
It follows from exactness and the fact that the vertical maps are isomorphisms that
\[\ker \lambda_W = \im(i_* \colon H_2(M) \to H_2(W)).\]
Thus $\lambda_W$ factors as $H_2(W) \to H_2(W)/\im(H_2(M)) \to H_2(W)^*$, as shown on the top row of the next diagram.
By exactness and commutativity of the diagram above, we see that $i^* \circ \lambda_W = i^* \circ \kappa \circ \wt{\lambda} = 0$.  Thus for any $v \in H_2(W)$, $\lambda_W(v)(w)=0$ whenever $w \in \im (i_*)$.  This implies that $\lambda^{\dag}_W$ is well-defined.  Thus we have a commutative diagram as follows:
\[\xymatrix{H_2(W) \ar[r] & \frac{H_2(W)}{\im(H_2(M))} \ar@{^{(}->}[r] \ar[dr]_-{\lambda^{\dag}_W} & H_2(W)^* \\ & & \Big(\frac{H_2(W)}{\im(H_2(M))}\Big)^*. \ar[u] }\]
Then since the top right horizontal map is injective, so is $\lambda^{\dag}_W$.  The domain and codomain of $\lambda^\dag_W$ are vector spaces over $\Q(t)$ of the same dimension, and so $\lambda^\dag_W$ is an isomorphism.
\end{proof}

 By Lemma~\ref{lemma:lambda-dag-nonsingular}, $(H_2(W)^\dag,\lambda^{\dag}_W)$ determines an element of the Witt group $L^0(\Q(t))$ of nonsingular hermitian sesquilinear forms.  By definition, two forms are equivalent in the Witt group if they become isometric after stablising one or both with a finite number of copies of the hyperbolic form
 $$\left(\Q(t)^2, \begin{pmatrix}
  0 & 1 \\ 1 & 0
\end{pmatrix}\right).$$  Addition of forms is by direct sum and the inverse of a form $(\Q(t)^n,\lambda)$ is $(\Q(t)^n,-\lambda)$.

Since $\Omega_4^{{STOP}}(B\Z) \cong \Z\oplus \Z_2$, any two choices of 4-manifold $W,W'$ with the same signature and Kirby-Siebenmann invariant are cobordant relative to their boundaries over $B\Z$.

\begin{lemma}\label{lemma:witt-class-independence}
  The intersection forms $\lambda_{W}^\dag$ and $\lambda_{W'}^{\dag}$ represent the same element in $L^0(\Q(t))$.
\end{lemma}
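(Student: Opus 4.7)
The plan is to upgrade the 4-dimensional bordism hypothesis to a 5-manifold and then run a ``half-lives-half-dies'' argument over $\Q(t)$ to exhibit $\lambda_W^\dag \oplus (-\lambda_{W'}^\dag)$ as a Witt-trivial form.

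First I would form the closed topological 4-manifold $N := W \cup_M (-W')$ with its induced map to $B\Z$. Since $W$ and $W'$ share signature and Kirby--Siebenmann invariant, the remark preceding the lemma shows that $[N] = 0$ in $\Omega_4^{STOP}(B\Z)$, so there is a compact 5-manifold $Z$ over $B\Z$ with $\partial Z = N$. The proof then splits into two claims: (a) $[\lambda_N] = 0$ in $L^0(\Q(t))$, where $\lambda_N$ is the $\Q(t)$-intersection form of the closed manifold $N$; and (b) $[\lambda_N] = [\lambda_W^\dag] - [\lambda_{W'}^\dag]$ in $L^0(\Q(t))$. Together these immediately yield the lemma.

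For (a), I would take the kernel $K$ of $H_2(N;\Q(t)) \to H_2(Z;\Q(t))$ as a candidate Lagrangian. Isotropy is geometric: two 2-cycles in $N$ that bound 3-chains in $Z$ can be pushed into the interior and made disjoint, so their intersection number in $N$ vanishes. The identity $K = K^\perp$ is a standard diagram chase combining the long exact sequence of the pair $(Z,N)$ with Poincar\'e--Lefschetz duality over $\Q(t)$ and the universal coefficient isomorphism, in close analogy with the diagram used in the proof of Lemma~\ref{lemma:lambda-dag-nonsingular}. Since $\Q(t)$ is a field, no torsion complications arise.

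For (b), I would apply Mayer--Vietoris with $\Q(t)$ coefficients to the splitting $N = W \cup_M (-W')$, identifying the image of $H_2(W;\Q(t)) \oplus H_2(W';\Q(t))$ in $H_2(N;\Q(t))$ as the quotient by the diagonal copy of $H_2(M;\Q(t))$. Geometric Novikov additivity shows that $\lambda_N$ restricts to $\lambda_W \oplus (-\lambda_{W'})$ on this image, since interior 2-cycles of $W$ and $-W'$ do not meet in $N$. Quotienting further by the radical --- identified in Lemma~\ref{lemma:lambda-dag-nonsingular} with the image of $H_2(M)$ in each summand --- produces exactly $\lambda_W^\dag \oplus (-\lambda_{W'}^\dag)$ on $H_2(W)^\dag \oplus H_2(W')^\dag$. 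The main obstacle is the remaining summand of $H_2(N;\Q(t))$, namely the cokernel of $H_2(W) \oplus H_2(W') \to H_2(N;\Q(t))$, which by Mayer--Vietoris injects into $H_1(M;\Q(t))$; I expect this piece to carry a hyperbolic form arising from Poincar\'e duality on $M$ (realized by an explicit Lagrangian built from the $H_1(M;\Q(t))$-$H_2(M;\Q(t))$ duality pairing), but verifying this cleanly while respecting the involution on $\Q(t)$ is the most delicate step of the argument.
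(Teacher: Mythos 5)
Your proposal follows essentially the same route as the paper: form the closed $4$-manifold $V := W \cup_M (-W')$, null-bord it by some $U$ over $B\Z$, take $P := \ker(H_2(V;\Q(t)) \to H_2(U;\Q(t)))$ as a Lagrangian for $\lambda_V$, and then decompose $\lambda_V$ via Mayer--Vietoris to exhibit $\lambda_W^\dag \oplus (-\lambda_{W'}^\dag)$ as a Witt-class summand. The place you stop --- the ``delicate step'' --- is exactly where the paper's real work lies, and it is a genuine gap in your writeup as it stands.

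Here is how it is filled. After splitting off the nonsingular orthogonal summand $\lambda_W^\dag \oplus (-\lambda_{W'}^\dag)$, the residual piece of $H_2(V;\Q(t))$ decomposes as $\bigl(H_2(M)/(\ker i_* + \ker i'_*)\bigr) \oplus \bigl(\ker i_* \cap \ker i'_*\bigr)$; your ``cokernel injecting into $H_1(M;\Q(t))$'' is the second summand, and the first (the radical of the restriction to the image of $H_2(W)\oplus H_2(W')$, in your terms) is $\im(H_2(M)\to H_2(V))$. Rather than constructing a hyperbolic form directly, the paper shows the residual form is \emph{metabolic}: the intersection form vanishes on $\im(H_2(M)\to H_2(V))$ because any two representative surfaces can be pushed into the interior of $W$ and made disjoint, and a chain of isomorphisms through the pair sequence $(W,M)$, Poincar\'e--Lefschetz duality and universal coefficients identifies $\im(H_2(M)\to H_2(V))$ with $\bigl(\ker i_* \cap \ker i'_*\bigr)^*$, so the two summands have equal $\Q(t)$-dimension and the first is a Lagrangian. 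Over a field, metabolic implies Witt trivial, so there is no need to realize the form as literally hyperbolic, and the involution on $\Q(t)$ causes no additional difficulty since the argument is a dimension count together with the geometric vanishing. Your intuition that $H_1(M;\Q(t))$--$H_2(M;\Q(t))$ duality does the work is correct; you just need to run it through the long exact sequence of $(W,M)$ (and $(W',M)$) rather than on $M$ alone.
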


\begin{proof}
In this proof, again all homology and cohomology groups are with $\Q(t)$~coefficients.
  Define $V:=W \cup_{M} -W'$ and let $U^5$ be a cobordism between $W$ and $W'$ rel.\ boundary, that is a null cobordism of $V$.
Define $P:= \ker(H_2(V) \to H_2(U))$.  Then by a standard argument $P$ is a Lagrangian subspace for the intersection form $\lambda_{V} \colon H_2(V) \times H_2(V) \to \Q(t)$ of $V$.  We elaborate slightly on this standard argument, for the convenience of the reader.  Consider the homology exact sequence of the pair $(U,V)$. A dimension counting argument, together with Poincar\'{e}-Lefschetz duality and universal coefficients, shows that $P$ is a half rank subspace of $H_2(V;\Q(t))$.  The commutative diagram  below, with exact rows and vertical isomorphisms deriving from Poincar\'{e}-Lefschetz duality and universal coefficients, can be used to show that the intersection form, whose adjoint is the middle vertical map, vanishes on $P$.  As in the previous proof, for a $\Q(t)$-module $N$ we denote $\Hom_{\Q(t)}(N,\Q(t))$ by $N^*$.
\[\xymatrix{H_3(U,V) \ar[r] \ar[d] & H_2(V) \ar[r] \ar[d] & H_2(U) \ar[d] \\
H_2(U)^* \ar[r] & H_2(V)^* \ar[r] & H_3(U,V)^*}\]

\begin{claim*}
  The intersection form $\lambda_V$ is Witt equivalent to $\lambda_W^{\dag} \oplus -\lambda_{W'}^\dag$.
\end{claim*}
From the claim and the fact from above that $\lambda_V$ is Witt equivalent to zero, the lemma follows.  The remainder of the proof comprises the proof of the claim.

Denote the inclusion induced maps on $\Q(t)$-coefficient homology by $i_* \colon H_*(M) \to H_*(W)$ and $i'_* \colon H_*(M) \to H_*(W')$.
It follows from the Mayer-Vietoris sequence for the decomposition $V = W \cup_M W'$ that $H_2(V)$ is isomorphic to
\begin{align*}
  &\coker\big(H_2(M) \xrightarrow{i_* \oplus i'_*} H_2(W)\oplus H_2(W')\big) \oplus \ker\big(H_1(M) \xrightarrow{i_* \oplus i'_*} H_1(W)\oplus H_1(W')\big)\\
  \cong & \frac{H_2(W)}{\im(i_*)} \oplus \frac{H_2(W')}{\im(i'_*)} \oplus \big(H_2(M)/(\ker(i_*) + \ker(i'_*))\big)   \oplus \big(\ker(i_*) \cap \ker(i'_*)\big).
  \end{align*}
Here is a justification of the above isomorphism. The last summands in each line are easily identified.  It remains to identify the first summand $\coker\big(H_2(M) \xrightarrow{i_* \oplus i'_*} H_2(W)\oplus H_2(W')\big)$ with the first three summands in the next line.  This follows from the general fact that for homomorphisms of vector spaces $i \colon A \to B$ and $i' \colon A \to B'$, the map
\[\ba{rcl}
\coker(A \xrightarrow{i \oplus i'} B \oplus B') & \to & \coker(i) \oplus \coker(i') \\
(b,b') + (i \oplus i')(A) & \mapsto & (b+ i(A), b' + i'(A))
\ea\]
is surjective with kernel $A/(\ker(i)+\ker(i'))$.

Elements of $H_2(M)/(\ker(i_*) + \ker(i'_*)) =\im(H_2(M) \to H_2(V))$ form a submodule on which the intersection form of $V$ vanishes, since given two representative surfaces, one can be pushed into $W$ slightly, to make them disjoint.  Elements of $H_2(W)/\im(i_*)$  and $H_2(W')/\im(i'_*)$ intersect trivially with elements of $\im(H_2(M) \to H_2(V))$.   The intersection form of $V$ restricted to $H_2(W)/\im(i_*) \oplus H_2(W')/\im(i'_*)$ is a direct sum  $\lambda_{W}^\dag \oplus -\lambda_{W'}^\dag$, and since each is nonsingular by Lemma~\ref{lemma:lambda-dag-nonsingular}, we can make a change of basis so that $\lambda_V$ has $\lambda_{W}^\dag \oplus -\lambda_{W'}^\dag$ as an orthogonal direct summand.
It therefore suffices to see that the form on
\[\big(H_2(M)/(\ker(i_*) + \ker(i'_*))\big)   \oplus \big(\ker(i_*) \cap \ker(i'_*)\big)\]
is Witt trivial.  For this, since the intersection form vanishes on the first summand, it suffices to see that this summand is of half rank.
We have:
\begin{align*}
 & \im\big(H_2(M) \to H_2(V)\big) \\
 \cong & H_2(M)/(\ker(i_*) + \ker(i'_*))\\
  \cong& \coker\big(H_3(W,M) \oplus H_3(W',M) \xrightarrow{\partial_* \oplus \partial'_*} H_2(M)\big) \\
 \cong& \coker\big( H^1(W) \oplus H^1(W') \xrightarrow{i^* \oplus (i')^*} H^1(M)\big) \\
 \cong &\coker\big( H_1(W)^* \oplus H_1(W')^* \xrightarrow{i^* \oplus (i')^*} H_1(M)^*\big) \\
 \cong &\ker\big( H_1(M) \xrightarrow{i_* \oplus i'_*} H_1(W) \oplus H_1(W')\big)^* \\
 \cong &\big(\ker(i_*) \cap \ker(i'_*)\big)^*
\end{align*}
Here the third isomorphism uses the commutativity of the left hand square in the large commutative diagram in the proof of Lemma~\ref{lemma:lambda-dag-nonsingular}.
This completes the proof of the lemma.
\end{proof}

We can represent an element of the Witt group $L^0(\Q(t))$ by a matrix $A(t)$, and evaluate at $z \in S^1 \subset \mathbb{C}$.  For $z$ such that $\det(A(z)) \neq 0$, this determines a nonsingular hermitian matrix over $\mathbb{C}$, and we can take its signature $\sigma(A(z))$.
Define a homomorphism $L^0(\Q(t)) \to \Z$, for $z=e^{i\theta} \in S^1 \subset \mathbb{C}$ by
\[A(t) \mapsto \smfrac{1}{2}\Big(\lim_{\omega \to \theta_+} \sigma(A(e^{i\omega})) + \lim_{\omega \to \theta_-} \sigma(A(e^{i\omega}))\Big) =: \sigma(A(z)).\]
It is not too hard to see that we have a well-defined homomorphism, as follows.  For transcendental $z \in S^1$, since $A(t)$ is nonsingular it is impossible to have $\det(A(z)) = 0$.  Therefore for each such $z$ we obtain a homomorphism $L^0(\Q(t)) \to L^0(\mathbb{C})$. Then the signature gives an isomorphism $L^0(\mathbb{C}) \toiso \Z$. The one-sided limits above can be computed using only $\omega$ that give rise to transcendental~$z$.

To define a quantity that is invariant under all choices of $W$, not just those with the same signature and $ks$, we need to quotient out the Witt group by the image of the intersection forms of closed $STOP$ 4-manifolds.  However since $\Omega_4^{STOP}(B\Z) \cong \Omega_4^{STOP}$, every closed 4-manifold with a map to $B\Z$ is bordant over $B\Z$ to
$$S^1 \times S^3 \#  \big(\#^p \mathbb{C}P^2\big) \# \big(\#^q \ol{\mathbb{C}P^2}\big) \# \big(\#^r \ast\hspace{-2pt}\mathbb{C}P^2\big)$$
for some $p,q,r$. Here $\ast\mathbb{C}P^2$ is the topological 4-manifold homotopy equivalent to $\mathbb{C}P^2$ but not homeomorphic to it, with $ks(\ast\mathbb{C}P^2) =1$, of \cite[Section~10.4]{Freedman-Quinn:1990-1}.  In particular the intersection form $\lambda_V$ over $\Q(t)$ of a closed 4-manifold $V \to B\Z$ is Witt equivalent to a form tensored up from the integers; more precisely, there exists a basis of $H_2(V;\Q(t))$, with respect to which the representative matrix contains only elements of $\Z \subset \Q(t)$.  Therefore, for a closed 4-manifold $V$, we have $\sigma(A(z)) = \sigma(A(1)) = \sigma(V)$ for all $z \in S^1$.  Thus we define
\begin{equation}\label{equation:signature-defect}
  \sigma_{M}(z):= \sigma\big(A(z)\big) - \sigma(W) \in \Z.
\end{equation}
This is an invariant of $M$ up to homeomorphisms which respect $\phi$.

\section{Construction of bounding 4-manifolds}\label{section:construction-of-4-manifolds}

Let $L$ be an $m$-component oriented, ordered link in $S^3$ and let $M_L$ be the zero-framed surgery manifold of $L$.  This admits a map $\phi \colon M_L \to B\Z$ corresponding to the map $\pi_1(M_L) \to \Z^m \to \Z$ given by the abelianisation that sends the $i$th oriented meridian to the $i$th standard basis vector $e_i$, followed by the map $\Z^m \to \Z$ sending $\sum_{i=1}^m x_i e_i \mapsto \sum_{i=1}^m x_i$.  

In this section, given an oriented connected embedded surface $\S$ in $D^4$ with boundary $L$, we construct a 4-manifold $W_\S$ with $\partial W_\S=M_L$, that admits an extension of $\phi$.  Lemma~\ref{lemma:COT2}, which deals with the case that $\S$ is a connected Seifert surface for $L$ pushed into the $4$-ball, is due to Ko~\cite[pp.~538-9]{Ko:1989-1}.
The explicit statement of the lemma for knots appears in \cite[Lemma~5.4]{Cochran-Orr-Teichner:2002-1}. We state the link version below, inviting the interested reader to check that the proof given for knots in \cite[Lemma~5.4]{Cochran-Orr-Teichner:2002-1} generalises easily to the case that the boundary of $F$ has more than one component.

\begin{lemma}\label{lemma:COT2}
  Given a connected Seifert surface $F$ for $L$, there exists a null-bordism $W_F$ for $M_L$ over $B\Z$, such that $\sigma_L(z) = \sigma(\lambda_{W_F}(z)) - \sigma(W_F)$ for all $z \in S^1$.  Thus $\sigma_{M_L}(z)=\sigma_L(z)$.
\end{lemma}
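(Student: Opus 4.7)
The plan is to build $W_F$ by cutting $D^4$ along the Seifert surface and then turning the resulting boundary into $M_L$ by handle attachments. First, push the interior of $F$ into $\Int(D^4)$ so that $F \subset D^4$ becomes properly embedded with $\partial F = L$. Let $N := D^4 \sm \nu F$; then
\[ \partial N \;=\; X_L \cup_{L \times S^1} (F \times S^1), \]
where the $S^1$ factor on the $F$-side is the meridian of $F$. To convert $\partial N$ into $M_L$, I would attach one $2$-handle along $\alpha_i \times \{\mathrm{pt}\}$ with the product framing for each element $\alpha_i$ of a basis of $H_1(F;\Z)$. Performing these $2g$ surgeries simultaneously turns $F$ into $\sqcup_{i=1}^m D^2$, so $F \times S^1$ becomes $\sqcup_{i=1}^m D^2 \times S^1$, completing $X_L$ to $M_L$ with the correct $0$-framings on each component (the surface framing of $F$ restricts on $L$ precisely to the Seifert framing). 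Set $W_F := N \cup (\text{these } 2g \text{ handles})$, so that $\partial W_F = M_L$.

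Second, I would check the bordism data. An Alexander duality computation gives $H_1(N;\Z) = \Z$ generated by a meridian $\mu$ of $F$, and each added $2$-handle is attached along $\alpha_i \times \{\mathrm{pt}\}$, which is already null-homologous in $N$ (push a surface in $D^4$ bounded by $\alpha_i$ off $F$). Hence $H_1(W_F;\Z) = \Z \cdot \mu$, and the resulting map $W_F \to B\Z$ restricts on $M_L$ to $\phi$, sending every meridian of $L_i$ to $1$.

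The main content is the $\Q(t)$-intersection form calculation. A Mayer-Vietoris argument on the infinite cyclic cover identifies $H_2(W_F;\Q(t))$ with $H_1(F;\Z) \otimes_{\Z} \Q(t)$: each basis curve $\alpha_i \subset F$ bounds an immersed surface $\Sigma_i$ in $N$, and $\Sigma_i$ together with a positive push-off and the core of the corresponding $2$-handle assembles into a $2$-cycle in the cover. A direct intersection computation, using that the Seifert form entry $V_{ij} = \lk(\alpha_i, \alpha_j^+)$ measures exactly how $\Sigma_i$ and a $t$-translate of $\Sigma_j$ meet, identifies $\lambda_{W_F}^\dag$ with the matrix $B(t) = (1-t)V + (1-t^{-1})V^T$. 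This is the link-case extension of the knot calculation in \cite[Lemma~5.4]{Cochran-Orr-Teichner:2002-1}; since the argument only uses $H_1(F;\Z)$, the multiple boundary components cause no essential trouble.

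Finally, since the integrally embedded $2$-cycles representing the basis can be checked to contribute trivially to the ordinary intersection form (each $\Sigma_i \cup \Sigma_i^+ \cup \text{core}$ has self-intersection $0$ after projection to the integer coefficient system), one verifies $\sigma(W_F) = 0$. Evaluating and averaging one-sided limits then yields
\[ \sigma(\lambda_{W_F}(z)) - \sigma(W_F) \;=\; \sigma(B(z))^{\mathrm{avg}} \;=\; \sigma_L(z), \]
proving the lemma and giving $\sigma_{M_L}(z) = \sigma_L(z)$ via \eqref{equation:signature-defect}. The subtle step, I expect, is the framing bookkeeping in the first paragraph, ensuring that all $m$ link components simultaneously pick up the $0$-framing; everything else is a direct generalisation of the knot calculation, since $F$ being a Seifert surface for a link rather than a knot only enlarges $H_1(F;\Z)$ and adds trivial circle-bundle summands, which wash out of the computation.
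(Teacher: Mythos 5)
Your overall strategy---build $W_F$ from the exterior of the pushed-in Seifert surface and identify the $\Q(t)$-intersection form with $B(t)=(1-t)V+(1-t^{-1})V^T$---is the right one, and it is what the paper does (note the paper does not reprove the form computation: it records the construction of $W_F$ and cites Ko~\cite{Ko:1989-1} and \cite[Lemma~5.4]{Cochran-Orr-Teichner:2002-1} for the identification with $B(t)$). However, your construction of $W_F$ in the first paragraph does not work as written, for two reasons. First, a full basis of $H_1(F;\Z)$ cannot be realised by disjoint simple closed curves on $F$ (in a symplectic basis $\alpha_i$ and $\alpha_{g+i}$ intersect once), so ``performing these $2g$ surgeries simultaneously'' does not compress $F$; what compresses a connected genus-$g$ surface with $m$ boundary components to $\bigsqcup^m D^2$ is a family of $g+m-1$ disjoint curves spanning a Lagrangian---and note that in the link case $H_1(F;\Z)\cong\Z^{2g+m-1}$, not $\Z^{2g}$. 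Second, and more seriously, even for a correct compressing family, attaching an ordinary $2$-handle along $\alpha\times\{\mathrm{pt}\}$ does not replace $F\times S^1$ by $F_\alpha\times S^1$ in the boundary: that replacement is effected by a \emph{round} handle $S^1\times(D^2\times D^1)$, i.e.\ a $2$-handle together with a $3$-handle. With only the $2$-handles the new boundary is $M_L$ connect-summed with copies of $S^1\times S^2$, so $\partial W_F\neq M_L$. The damage is visible numerically: the correct $W_F$ satisfies $\chi(W_F)=\chi(Y_F)=2g+m-1=\operatorname{rank} H_1(F;\Z)$, which is exactly what is forced if $H_2(W_F;\Q(t))\cong H_1(F;\Z)\otimes\Q(t)$ and $H_i(W_F;\Q(t))=0$ for $i\neq 2$; your $W_F$ has $\chi$ larger by the number of $2$-handles attached, so the identification claimed in your third paragraph cannot hold for it.

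The repair is to use the paper's construction $W_F:=Y_F\cup_{S^1\times F}S^1\times G$, where $G$ is a handlebody bounded by the capped-off surface; gluing $S^1\times G$ packages exactly the round handles you need and gives $\partial W_F=M_L$ with the correct zero-framings. With that in place, your second and third paragraphs (the coefficient system, the $2$-cycles assembled from the surfaces $\Sigma_i$, their push-offs and the handle cores, and the computation of the form as $(1-t)V+(1-t^{-1})V^T$ via the Seifert linking numbers) go through essentially as in \cite[Lemma~5.4]{Cochran-Orr-Teichner:2002-1}, and $\sigma(W_F)=0$ follows from the Lagrangian of embedded tori of the form $\alpha\times S^1$, exactly as in the proof of Lemma~\ref{lemma:homology-W-S}.
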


The construction of $W_F$, which we will describe below, generalises to produce a 4-manifold $W_{\S}$ for any collection $\S=\S_1 \sqcup \dots \sqcup \S_m \hookrightarrow D^4$ of disjointly embedded locally flat oriented surfaces in the 4-ball with $\partial \S=L$.  For $W_F$, computation of the intersection form shows that the intersection form coincides with the matrix used to compute the Levine-Tristram signatures; this is the main step in the proof of Lemma~\ref{lemma:COT2}.  Since the signature defect of (\ref{equation:signature-defect}) is independent of the bounding 4-manifold, we obtain the same signature defect for any 4-manifold constructed using any collection of surfaces $\S$.  To prove Theorem~\ref{theorem:LT-sig-4-ball-genus} we will investigate the relationship between the genera of the surfaces $\S$ and the euler characteristic of $W_{\S}$, which in turn is related to the signatures.

Here is the construction of $W_{\S}$.
Suppose that $L = \partial \S$, where $\S = \S_1 \sqcup \dots \sqcup \S_m \hookrightarrow D^4$, and $\S_i$ has genus~$g_i$.  Define $g:= \sum_{i=1}^m g_i$.  Note that the pairwise linking numbers of $L$ must all be zero in order for such a collection of disjointly embedded surfaces to exist.

Define $Y_{\S}:= D^4 \sm \nu \S$.
Locally flat submanifolds of 4-manifolds have normal bundles, by \cite[Section~9.3]{Freedman-Quinn:1990-1}.
Note that $\partial Y_{\S} = X_L \cup_{\partial X_L} S^1 \times \S$, where $X_L$ denotes the link exterior $S^3 \sm \nu L$. The capped off surface $\S \cup \bigsqcup^m D^2$ can be mapped by a homeomorphism to the boundary of a collection of handlebodies $G=G_1 \sqcup \dots \sqcup G_m$ ($G$ is not embedded in $D^4$).  

Choose $G$ so that $\ker(H_1(\S;\Z) \to H_1(G;\Z))$ lies in $\ker(\phi| \colon H_1(\Sigma;\Z) \to \Z)$, so that the map $\phi$ extends over $G$. Note that $\ker(\phi| \colon H_1(\Sigma;\Z) \to \Z)$ is always at least a half rank summand, so such a $G$ can always be found.  

Define $W_\S:= Y_{\S} \cup_{S^1 \times \S} S^1 \times G$.    Observe that $\partial W_{\S} = M_L$.  Moreover there is an extension of $\phi \colon M_L \to B\Z$ to a map $W \to B\Z$ that induces a coefficient system $\pi_1(W) \to \Z$, which we will exploit in Section~\ref{section:proof-of-LT-sign-thm}.

We remark that the construction of $W_F$ used in the proof of Lemma~\ref{lemma:COT2} from \cite[Lemma~5.4]{Cochran-Orr-Teichner:2002-1}, for $F$ as above a connected pushed in Seifert surface, only differs in that $G$ is a single connected handlebody instead of a disjoint union of handlebodies.

\begin{lemma}\label{lemma:homology-W-S}
The rational homology of $W_\S$ is given by
  \[H_i(W_{\S};\Q) \cong \begin{cases}
    \Q & i=0 \\
    \Q^m & i=1 \\
    \Q^{2g} & i=2 \\
    0 & \text{else.}
     \end{cases}\]
In particular the euler characteristic $\chi(W_{\S}) = 2g-m+1$.  Moreover, $\sigma(W_{\S})=0$.
\end{lemma}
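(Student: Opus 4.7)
The plan is to compute $H_*(W_\S;\Q)$ via Mayer-Vietoris for the decomposition $W_\S = Y_\S \cup_{S^1\times \S}(S^1\times G)$, and then to identify the intersection form on $H_2$ as one with a manifestly half-dimensional isotropic subspace. Begin by computing the homology of the pieces: using Künneth, both $S^1\times \S$ and $S^1\times G$ have vanishing Euler characteristic, with homology determined by that of $\S$ and the disjoint handlebody $G$. For $Y_\S$ I would apply the Mayer-Vietoris sequence for $D^4 = Y_\S\cup\nu\S$ with overlap the sphere bundle $S^1\times \S$; since $D^4$ is contractible and $\nu\S\simeq \S$, the sequence yields $H_0(Y_\S)=\Q$, $H_1(Y_\S)=\Q^m$ (generated by the meridians of the $\S_i$), $H_2(Y_\S)=\Q^{2g}$ coming from the Künneth summand $H_1(\S)\otimes H_1(S^1)$, and zero in higher degrees.

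The key technical input is that the inclusion-induced map $\sigma \colon H_1(\S;\Q) \to H_1(Y_\S;\Q)$ coming from the sphere-bundle inclusion $\S\times \{p_0\}\hookrightarrow S^1\times \S\hookrightarrow Y_\S$ vanishes. I would argue this by writing $[a^+]$ in the meridian basis of $H_1(Y_\S;\Q)=\Q^m$: the coefficient of $\mu_j$ vanishes for $j\ne i$ because $a^+$ may be chosen arbitrarily close to $\S_i$, and for $j=i$ it vanishes because $a^+$ is a parallel section of the trivial normal $D^2$-bundle over $\S_i$ and therefore does not wind around $\S_i$. With $\sigma=0$ in hand, I would use excision and Künneth to identify $H_*(W_\S,Y_\S)\cong H_*(S^1\times G,S^1\times \S)$, which together with the elementary calculation $H_*(G,\S)\cong \Q^g$ concentrated in degree $2$ gives $H_2(W_\S,Y_\S)=H_3(W_\S,Y_\S)=\Q^g$ and zero elsewhere. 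Threading these through the long exact sequence of the pair $(W_\S, Y_\S)$, with both connecting maps controlled by the inclusion $\ker\phi\hookrightarrow H_1(\S)$ (where $\phi\colon H_1(\S)\to H_1(G)$ is the Lagrangian-killing surjection), produces $H_4(W_\S)=H_3(W_\S)=0$, $H_2(W_\S)=\Q^{2g}$, $H_1(W_\S)=\Q^m$, $H_0(W_\S)=\Q$, so that $\chi(W_\S)=2g-m+1$.

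For the signature, I would choose a symplectic basis $\{a_1,\ldots,a_g,b_1,\ldots,b_g\}$ of $H_1(\S;\Q)$ with $\{a_i\}$ spanning $\ker\phi$ and $\{\phi(b_j)\}$ a basis of $H_1(G;\Q)$. This produces a basis of $H_2(W_\S;\Q)$ consisting of (i) the $g$ tori $\phi(b_j)\times S^1$, which form a totally isotropic subspace since they can be displaced off themselves along the $S^1$ factor; and (ii) $g$ classes $[\gamma_i]$ built by gluing the meridian disk $\{*\}\times D_{a_i}\subset\{*\}\times G$ to a 2-chain $E_{a_i}\subset Y_\S$ bounding the pushoff $a_i^+$. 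The mixed pairing $\langle\phi(b_j)\times S^1,[\gamma_i]\rangle$ is computed in $S^1\times G$ as the intersection $b_j\cdot D_{a_i}=\delta_{ij}$, so the intersection form has matrix $\begin{pmatrix}0 & I\\ I & S\end{pmatrix}$ for some symmetric $g\times g$ matrix $S$, which has vanishing signature (the first block is a Lagrangian for the non-degenerate hyperbolic form). Hence $\sigma(W_\S)=0$. The main obstacle is the vanishing of $\sigma$, since the naive subsurface-bounding argument only handles separating classes on $\S_i$, and the general case requires either a direct linking-number argument or the triviality of the normal bundle to rule out winding contributions.
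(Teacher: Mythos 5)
Your proposal follows the same basic skeleton as the paper's proof: the Mayer--Vietoris computation of $H_*(Y_\S;\Q)$ from $D^4 = Y_\S \cup_{S^1\times\S}\nu\S$, the decomposition $W_\S = Y_\S\cup_{S^1\times\S}S^1\times G$, and an isotropic subspace of tori of the form (curve in $G$)$\times S^1$ for the signature. The two main deviations are: (i) you run the long exact sequence of the pair $(W_\S,Y_\S)$ together with excision $H_*(W_\S,Y_\S)\cong H_*(S^1\times G,S^1\times\S)$, whereas the paper applies Mayer--Vietoris to $W_\S=Y_\S\cup S^1\times G$ directly; and (ii) you are more explicit about the intersection form, constructing dual classes $[\gamma_i]$ so that the matrix is visibly $\bigl(\begin{smallmatrix}0&I\\I&S\end{smallmatrix}\bigr)$, rather than only exhibiting the isotropic $\Q^g$ and asserting it is Lagrangian. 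Your version has the merit of making visible a fact the paper leaves implicit, namely that the intersection form on $H_2(W_\S;\Q)$ is nonsingular (without that, a $g$-dimensional isotropic subspace of a $2g$-dimensional space would not force $\sigma=0$). These are useful differences in exposition, not a different method.

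There is, however, a genuine gap in the step you rightly identify as ``the key technical input.'' You need the pushoff map $\psi\colon H_1(\S;\Q)\to H_1(Y_\S;\Q)$ to vanish, equivalently $\lk(\alpha^+,\S_j)=0$ for every $\alpha$ on $\S_i$ and every $j$, and you argue the $j\ne i$ case by saying $\alpha^+$ ``may be chosen arbitrarily close to $\S_i$.'' Proximity only shows that $\alpha^+$ is homotopic to $\alpha$ inside $\nu\S_i\subset D^4\setminus\S_j$, hence $\lk(\alpha^+,\S_j)=\lk(\alpha,\S_j)$; it does not show this common value is zero. A curve lying on $\S_i$ can in principle link $\S_j$, and ruling this out requires an actual argument --- for instance, taking a relative Seifert $3$-chain $W_j$ with $\partial W_j = \S_j+F_j$ for a Seifert surface $F_j\subset S^3$, showing the $1$-manifold $W_j\cap\S_i$ is nullhomologous in $H_1(\S_i,L_i;\Q)$ using $\lk(L_i,L_j)=0$, and separately choosing the trivialisation of $\nu\S_i$ so that the $j=i$ linking vanishes (which you correctly gesture at, but which is a choice, not automatic). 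To be fair, the paper's Mayer--Vietoris cokernel computation needs exactly the same input --- $\psi(\ker\phi)=0$ --- to conclude $H_1(W_\S;\Q)\cong\Q^m$ rather than a proper quotient, and leaves this equally tacit. So you've put your finger on the right delicate point; the ``arbitrarily close'' justification just isn't an argument for it, and your construction of the classes $[\gamma_i]$ (which requires $a_i^+$ to bound a $2$-chain $E_{a_i}$ in $Y_\S$) also silently relies on the same vanishing.
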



\begin{proof}
Use the decomposition $D^4=Y_{\S} \cup_{S^1 \times \S } D^2 \times \S$, and the associated Mayer-Vietoris sequence, to compute
\[H_i(Y_{\S};\Q) \cong \begin{cases}
    \Q & i=0 \\
    \Q^m & i=1 \\
    \Q^{2g} & i=2 \\
    0 & \text{else.}
     \end{cases}\]
     More details can be found in the proof of Proposition~\ref{Prop:A} below.
     Note that $\chi(Y_\S)=2g-m+1$.  From the formulae $\chi(A \cup B) = \chi(A) +\chi(B) - \chi(A \cap B)$ and $\chi(S^1 \times Z) =0$ (for any finite CW-complexes $A$, $B$ and $Z$), it follows that \[\chi(W_\S) = \chi(Y_{\S}) + \chi(S^1 \times G) - \chi(S^1 \times \S) = \chi(Y_{\S}) = 2g-m+1.\]

     Next use the decomposition $W_{\S} = Y_{\S} \cup_{S^1 \times \S} S^1 \times G$ to compute the homology of $W_{\S}$.
     We may assume that a summand $\Q^g \subset H_1(\S;\Q)$ dies in $H_1(G;\Q)$.  We have
     \begin{align*}
    \to & H_2(S^1 \times \S;\Q) \xrightarrow{j_2} H_2(S^1 \times G;\Q) \oplus H_2(Y_\S;\Q) \to H_2(W_\S;\Q) \to \\
       \to &H_1(S^1 \times \S;\Q) \xrightarrow{j_1} H_1(S^1 \times G;\Q) \oplus H_1(Y_\S;\Q) \to H_1(W_\S;\Q) \xrightarrow{0}
     \end{align*}
  by a straightforward computation with zeroth homology.
  This yields
 \[\xymatrix @R-0.7cm{
    \ar[r] & \Q^{2g} \ar[r]^-{j_2} & \Q^g \oplus \Q^{2g} \ar[r] & H_2(W_\S;\Q) \ar[r] & \\
       \ar[r] & \Q^{2g} \oplus \Q^m \ar[r]^-{j_1} & (\Q^g \oplus \Q^m) \oplus \Q^m \ar[r] & H_1(W_\S;\Q) \ar[r]^-{0} &
     }\]
The map $j_1\colon H_1(S^1 \times \S;\Q) \to H_1(S^1 \times G;\Q) \oplus H_1(Y_\S;\Q)$ serves to identify the generators of $\Q^m \cong H_1(Y_\S;\Q)$ with the $S^1 \times \{pt\}$ summands of
\[H_1(S^1 \times G;\Q) \cong (H_0(S^1;\Q) \times H_1(G;\Q)) \oplus (H_1(S^1;\Q) \otimes H_0(G;\Q))   \cong \Q^g \oplus \Q^m,\]
and either kills the elements of $H_0(S^1;\Q) \otimes H_1(G;\Q) \cong \Q^g$ or identifies them with elements of $H_1(Y_\S;\Q) \cong \Q^m$.  Thus $H_1(W_\S;\Q) \cong \Q^m$ as claimed.  We observe that the kernel of $j_1$ is isomorphic to $\Q^g$ by dimension counting.

The map $j_2\colon H_2(S^1 \times \S;\Q) \to H_2(S^1 \times G;\Q) \oplus H_2(Y_\S;\Q)$ determines an isomorphism when the codomain is restricted to $H_2(Y_\S;\Q)$, which the conscientious reader will have seen in the Mayer-Vietoris computation from the beginning of the proof. Thus, taking the cokernel of~$j_2$ identifies half of a generating set of $H_2(Y_\S;\Q)\cong \Q^{2g}$ with generators of $H_2(S^1 \times G;\Q) \cong \Q^g$, and kills the other half of the generators.  We obtain a short exact sequence
\[0 \to \Q^g \to H_2(W_{\S};\Q) \to \Q^{g} \to 0,\]
so $H_2(W_\S;\Q) \cong \Q^{2g}$ as claimed.
Continuing the Mayer-Vietoris computation to the left, since $\ker j_2 = 0$, the higher homology groups are easily seen to vanish.

For the last sentence of the lemma, namely $\sigma(W_\S)=0$, inspection of the generators which can be understood from the above proof shows that the ordinary intersection form on $H_2(W_{\S};\Q)$ has a $\Q^g$ direct summand, to wit the $\Q^g$ on the left of the short exact sequence above, represented by disjointly embedded surfaces.  These are of the form $\a_{2i} \times S^1$, where $\a_1,\dots,\a_{2g}$ is a symplectic basis for $H_1(\S;\Q)$ and $\ker(H_1(\S;\Q) \to H_1(G;\Q))$ is generated by the $\a_{2i-1}$ for $i=1,\dots,g$. Such a basis can always be found.  These embedded surfaces generate a Lagrangian submodule of the intersection form on $H_2(W_{\S};\Q)$, from which it follows that $\sigma(W_{\S})=0$.
\end{proof}

\section{Proof of Theorem~\ref{theorem:LT-sig-4-ball-genus}}\label{section:proof-of-LT-sign-thm}

Suppose that an ordered oriented link $L=L_1 \sqcup \dots \sqcup L_m$ satisfies $L=\partial \Sigma$, where $\Sigma= \S_1 \sqcup \dots \sqcup \S_m$ is a properly embedded, oriented, locally flat collection of surfaces in $D^4$, with genus $g$.

Recall that $X_L := S^3 \sm \nu L$, the link exterior, and let $M_L = X_L \cup_{\partial X_L} \bigcup^m S^1 \times D^2$ be the 3-manifold obtained from zero-framed surgery on~$L$.  The representation $\pi_1(X_L) \to H_1(X_L;\Z) \toiso \Z^m \to \Z$ extends to $\pi_1(M_L)$.


\begin{definition}\label{defn:zero-surgery-nullity-link}
  Define the \emph{zero-surgery nullity} of a link $L$ to be:
  \[\beta(M_L) := \dim(H_1(M_L;\Q(t))).\]
\end{definition}


\noindent The quantities $\beta(L)$ and $\beta(M_L)$ are equal by the following lemma.

\begin{lemma}\label{lemma:X_L-M_L}
  The inclusion $X_L \to M_L$ induces an isomorphism $H_1(X_L;\Q(t)) \toiso H_1(M_L;\Q(t))$.
\end{lemma}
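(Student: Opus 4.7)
The plan is to apply Mayer--Vietoris with $\Q(t)$ coefficients to the standard surgery decomposition $M_L = X_L \cup (\bigsqcup_{i=1}^{m} S^1 \times D^2)$, with pairwise intersection $\bigsqcup_{i=1}^{m} T^2_i = \partial X_L$, and show that both the filling solid tori and the connecting boundary tori are acyclic with respect to the coefficient system of the lemma. Because the pairwise linking numbers of $L$ all vanish (as $L$ bounds a disjoint collection of surfaces in $D^4$), the map $\pi_1(X_L) \to \Z$ extends across the surgery to $\pi_1(M_L) \to \Z$, so the inclusion $X_L \hookrightarrow M_L$ induces the desired map on $\Q(t)$-homology.

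First I would show $H_*(S^1 \times D^2;\Q(t)) = 0$ for each filling solid torus. The key subtlety is that zero-framed surgery exchanges meridian and longitude on the boundary torus: the meridian $\partial D^2$ of the filling piece is glued to the zero-framed longitude $\lambda_i$, so the core $S^1 \times \{0\}$ becomes isotopic in $M_L$ to the meridian $\mu_i$ of $L_i$. Hence the induced representation $\pi_1(S^1 \times D^2) = \Z \to \Z$ is the identity, and after tensoring the cellular chain complex of the universal cover with $\Q(t)$ one obtains $\Q(t) \xrightarrow{t-1} \Q(t)$; since $t-1$ is a unit in $\Q(t)$, both homology groups vanish. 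For the connecting torus $T^2_i$, the representation sends $\mu_i \mapsto t$ and $\lambda_i \mapsto 1$ (using $\ell_i := \sum_{j\neq i}\lk(L_i,L_j) = 0$), and either a direct chain-level computation on the standard CW structure of $T^2$, or a K\"unneth decomposition along the preceding paragraph, shows $H_*(T^2_i;\Q(t)) = 0$ for the same reason.

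Plugging these vanishings into Mayer--Vietoris collapses the relevant portion to
$$0 \to H_1(X_L;\Q(t)) \oplus 0 \to H_1(M_L;\Q(t)) \to 0,$$
where the first arrow is the inclusion-induced map, giving the required isomorphism. The only point to watch is the correct identification of the coefficient system on the filling solid torus: the naive guess that its core $S^1$ maps to $\lambda_i$ (and hence to $0 \in \Z$, giving nonvanishing homology) is wrong, because zero-surgery swaps meridian and longitude, and it is precisely the resulting nontriviality of the representation on the fillings that forces the solid tori to be acyclic.
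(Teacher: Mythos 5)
Your proposal is correct and follows essentially the same route as the paper: decompose $M_L$ by Mayer--Vietoris into $X_L$ and the filling solid tori, observe that the cores of the filling tori are isotopic to the meridians $\mu_i$ (so the representation is nontrivial on them), conclude from invertibility of $t-1$ in $\Q(t)$ that the solid tori and the boundary tori are $\Q(t)$-acyclic, and read off the isomorphism. The paper sidesteps your explicit ``meridian/longitude swap'' discussion by simply parametrizing the filling as $\mu_L \times D^2$ from the outset, which builds the core-equals-meridian identification into the notation, and it deduces the torus/solid-torus vanishing from $H_*(\mu_{L_i};\Q(t))=0$ via the K\"unneth theorem rather than a direct chain computation; both are minor expository variants of the same argument.
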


\begin{proof}
Define $\mu_L := \mu_{L_1} \sqcup \cdots \sqcup \mu_{L_m} \subset \partial X_L$, by taking $\mu_{L_i}$ to be an oriented meridian of the $i$th component $L_i$ of $L$.
Then $\partial X_L \cong \mu_L \times S^1$, and one forms the zero surgery $M_L$ by glueing $M_L = X_L \cup_{\mu_L \times S^1} \mu_L \times D^2$.

For each $i=1,\dots,m$, the representation of $\pi_1(M_L)$ restricted to $\pi_1(\mu_{L_i}) \to \Z$ is nontrivial (in fact it is an isomorphism).  Thus $H_*(\mu_{L_i};\Q(t)) =0$.  For the zeroth homology, this uses that $t-1$ is invertible in $\Q(t)$.  The K\"{u}nneth theorem then implies that $H_*(\mu_{L_i} \times Y;\Q(t)) =0$ for any finite CW-complex $Y$.  Thus for the disjoint union $\mu_L \times Y$ we also have $H_*(\mu_L \times Y;\Q(t)) =0$.  The Mayer-Vietoris sequence for $M_L = X_L \cup_{\mu_L \times S^1} \mu_L \times D^2$ then yields an isomorphism induced by the inclusion, $H_1(X_L;\Q(t)) \toiso H_1(M_L;\Q(t))$, as claimed.

\end{proof}



In the proof below of Theorem~\ref{theorem:LT-sig-4-ball-genus}, in light of Lemma~\ref{lemma:X_L-M_L}, we will use $\beta(L)$ in place of $\beta(M_L)$.  We need one more lemma before we begin the proof of the theorem.

\begin{lemma}\label{lemma:W_S-hom-Qt-coeffs}
  We have that $H_i(W_{\S};\Q(t))=0$ for $i=0,3,4$.
\end{lemma}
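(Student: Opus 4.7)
My plan is to treat $i=0$ and $i=4$ directly, and to reduce the case $i=3$ to the rational homology computation of Lemma~\ref{lemma:homology-W-S} via the Wang sequence for the infinite cyclic cover associated to $\phi$.

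Since $W_{\S}$ is a compact topological $4$-manifold with non-empty boundary $M_L$, it admits a handle decomposition with no $4$-handles, and hence is homotopy equivalent to a finite CW complex of dimension at most $3$. This immediately yields $H_4(W_{\S};\Q(t)) = 0$. For $H_0$, the map $\phi\colon \pi_1(W_{\S}) \to \Z$ is surjective (a meridian of any component of $L \subset M_L$ is sent to $1$), and $t-1$ is a unit in the field $\Q(t)$, so $H_0(W_{\S};\Q(t)) \cong \Q(t)/(t-1)\Q(t) = 0$.

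For the remaining case $i=3$, I would pass to the infinite cyclic cover $p\colon \widetilde{W}_{\S} \to W_{\S}$ determined by $\phi$. Flatness of $\Q(t)$ over $\Q[t,t^{-1}]$ gives
\[ H_*(W_{\S};\Q(t)) \cong H_*(\widetilde{W}_{\S};\Q) \otimes_{\Q[t,t^{-1}]} \Q(t). \]
Lifting the $3$-dimensional CW structure on $W_{\S}$, the cellular chain complex of $\widetilde{W}_{\S}$ is finitely generated and free over $\Z[t,t^{-1}]$, so each $H_i(\widetilde{W}_{\S};\Q)$ is a finitely generated $\Q[t,t^{-1}]$-module and vanishes for $i \geq 4$. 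The Wang sequence
\[ \cdots \to H_i(\widetilde{W}_{\S};\Q) \xrightarrow{\,t-1\,} H_i(\widetilde{W}_{\S};\Q) \to H_i(W_{\S};\Q) \to H_{i-1}(\widetilde{W}_{\S};\Q) \to \cdots, \]
combined with the vanishings $H_3(W_{\S};\Q) = H_4(W_{\S};\Q) = 0$ from Lemma~\ref{lemma:homology-W-S}, then forces multiplication by $(t-1)$ to act as an isomorphism on $H_3(\widetilde{W}_{\S};\Q)$.

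The finishing step is purely algebraic: because $\Q[t,t^{-1}]$ is a principal ideal domain, the finitely generated module $H_3(\widetilde{W}_{\S};\Q)$ splits as a free part plus a torsion part; multiplication by $(t-1)$ is not surjective on any free summand $\Q[t,t^{-1}]$, so the free rank must be zero, and tensoring the remaining torsion part with $\Q(t)$ yields zero. Hence $H_3(W_{\S};\Q(t)) = 0$. The main conceptual obstacle is to arrange the argument so that the only topological input needed is the rational homology computation of Lemma~\ref{lemma:homology-W-S}; once this reduction is recognised, the $\Q(t)$-coefficient vanishing is a formal consequence of the Wang sequence and the structure theorem for modules over a PID.
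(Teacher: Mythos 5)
Your treatment of $i=0$ matches the paper's in spirit: nontriviality of the representation $\pi_1(W_\S)\to\Z$ forces $H_0$ with $\Q(t)$-coefficients to vanish. For $i=3$ you take a genuinely different route from the paper, which instead applies Poincar\'e--Lefschetz duality and universal coefficients to identify $H_i(W_\S;\Q(t))\cong \Hom_{\Q(t)}\big(H_{4-i}(W_\S,M_L;\Q(t)),\Q(t)\big)$ and then cites a Nakayama-type vanishing result of Cochran--Orr--Teichner to pass from $H_j(W_\S,M_L;\Q)=0$ to $H_j(W_\S,M_L;\Q(t))=0$ for $j=0,1$. Your Wang-sequence argument is more self-contained and makes the content of that citation explicit, using only Lemma~\ref{lemma:homology-W-S} and the structure theorem for finitely generated modules over the PID $\Q[t,t^{-1}]$; this is a valid and arguably more transparent alternative.

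The gap is at $i=4$, where you assert that a compact topological $4$-manifold with nonempty boundary admits a handle decomposition with no $4$-handles. This is false for topological $4$-manifolds in general: a topological handle decomposition of a $4$-manifold would yield a smooth structure, and there exist non-smoothable compact topological $4$-manifolds with boundary, for instance $\ast\CC P^2$ with an open ball removed, which has nonzero Kirby--Siebenmann invariant. One could try to verify $KS(W_\S)=0$ directly, but the cleaner fix is to run your Wang-sequence and PID argument in degree $4$ as well: since $H_5(W_\S;\Q)=0$ (as $W_\S$ is a $4$-manifold) and $H_4(W_\S;\Q)=0$ by Lemma~\ref{lemma:homology-W-S}, multiplication by $t-1$ is an isomorphism on $H_4(\wt{W}_\S;\Q)$, whence $H_4(W_\S;\Q(t))=0$ by the same argument as in your $i=3$ case. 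Note also that the finite generation of $H_*(\wt{W}_\S;\Q)$ over $\Q[t,t^{-1}]$, which your argument requires, does not need a handle decomposition: it suffices that compact topological manifolds are homotopy equivalent to finite CW complexes, which is known unconditionally.
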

 \begin{proof}
   For $i=0$, this follows from~\cite[Proposition~2.9]{Cochran-Orr-Teichner:1999-1}, since the representation $\pi_1(W_\S) \to \Z$ is nontrivial.  For $i=3,4$, apply Poincar\'{e} duality and universal coefficients to see that $H_i(W_\S;\Q(t)) \cong H_{4-i}(W_\S,M_L;\Q(t))$, and then note that $H_{j}(W_\S,M_L;\Q) =0$ for $j=0,1$ and apply~\cite[Proposition~2.11]{Cochran-Orr-Teichner:1999-1}, which implies that also $H_{j}(W_\S,M_L;\Q(t))=0$ for $j=0,1$.
\end{proof}

%
%
%
%

\noindent Now we are ready to begin the proof.

\begin{proof}[Proof of Theorem~\ref{theorem:LT-sig-4-ball-genus}]
  We have:
\begin{equation} \label{eqn:a}
  2g-m+1 = \chi^\Q(W_\S) = \chi^{\Q(t)}(W_\S) =  \dim H_2(W_\S;\Q(t)) - \dim H_1(W_\S;\Q(t)).
\end{equation}
The first and last equalities follow from Lemma~\ref{lemma:homology-W-S} and Lemma~\ref{lemma:W_S-hom-Qt-coeffs} respectively.
As noted above, $H_1(W_{\S},M_L;\Q(t))=0$, which implies that the map $H_1(M_L;\Q(t)) \to H_1(W_{\S};\Q(t))$ is surjective.  Thus
\begin{equation}\label{eqn:c}
\beta(L) = \dim H_1(M_L;\Q(t)) \geq \dim H_1(W_\S;\Q(t)).
\end{equation}
Next,
\begin{align*}
 & \dim\big(\im \big(H_2(M_L;\Q(t)) \to H_2(W_\S;\Q(t))\big) \big) \\
= & \dim H_2(M_L;\Q(t)) - \dim \big( \im\big(H_3(W_\S,M_L;\Q(t)) \to H_2(M_L;\Q(t))\big) \big)\\
= & \dim H_2(M_L;\Q(t)) - \dim H_3(W_\S,M_L;\Q(t))\\
= & \beta(L) - \dim H_1(W_\S;\Q(t)).
\end{align*}
The first equality follows from the long exact sequence of the pair $(W_\S,M_L)$. The second equality also follows from this sequence, and the fact that $H_3(W_\S;\Q(t))=0$, so that the map $H_3(W_\S,M_L;\Q(t)) \to H_2(M_L;\Q(t))$ is injective.  The third equality follows from Poincar\'{e} duality, universal coefficients, and the fact that $\beta(L) = \beta(M_L) = \dim H_1(M_L;\Q(t))$.
We therefore obtain
\begin{align*}
  &\dim H_2(W;\Q(t))^{\dag} \\
  =& \dim H_2(W;\Q(t)) - \dim(\im H_2(M_L;\Q(t))) \\
  =& \dim H_2(W;\Q(t)) - \beta(L) + \dim H_1(W_\S;\Q(t))\\
 =& \dim H_2(W;\Q(t)) -\dim H_1(W_\S;\Q(t)) - \beta(L) + 2\dim H_1(W_\S;\Q(t))\\
 =& 2g-m+1 - \beta(L) + 2\dim H_1(W_\S;\Q(t))\\
 \leq & 2g-m+1 -\beta(L) + 2\beta(L)\\
  =& 2g-m+1+\beta(L).
\end{align*}
The first four equalities follow by definition, the computation above, algebra and equation~(\ref{eqn:a}) respectively.  The inequality follows from equation~(\ref{eqn:c}).
Finally, for any $z \in S^1$,
\begin{equation*}
|\sigma_L(z)|= |\sigma_{M_L}(z)| = |\sigma(\lambda_{W_{\S}}(z)) - \sigma(W_{\S})| = |\sigma(\lambda_{W_{\S}}(z))| \leq \dim H_2(W;\Q(t))^{\dag}.
\end{equation*}
The first equality is by Lemma~\ref{lemma:COT2}, the second by definition of $\sigma_{M_L}(z)$, and the final equality follows because $\sigma(W_\S)=0$ by Lemma~\ref{lemma:homology-W-S}.  The inequality follows from the fact that the absolute value of the signature of a form is always at most the dimension of the vector space on which it is defined.  Combining the above two displayed inequalities, we obtain
\[|\sigma_L(z)| \leq 2g-m+1+\beta(L).\]
Since this holds for any collection of surfaces $\S$ with boundary $L$, we can replace $g=g_4(L)$, and rearrange to arrive at the bound
\[|\sigma_L(z)|+m-1-\beta(L) \leq 2g_4(L)\]
as desired.  This completes the proof of Theorem~\ref{theorem:LT-sig-4-ball-genus}.
\end{proof}


\section{Characterisation of links with given four genera}\label{section:characterisation}

In this section we make preparations for the proof of Theorem~\ref{Thm:our_main} by giving a homological characterisation of the exterior in $D^4$ of a collection of disjointly embedded surfaces with a given set of genera.  \emph{From now on all homology groups will be with $\Z$ coefficients}, so we will omit the coefficients from the notation.

Let $\S = \S_1 \sqcup \dots \sqcup \S_m$ be a collection of oriented surfaces of genera $g_1,\dots,g_m$, each with a single $S^1$ boundary component.  
We write $g := \sum_{i=1}^m\, g_i.$  Let $X_L := S^3\setminus \nu L$ be the exterior of an oriented, ordered link $L = L_1 \sqcup \dots \sqcup L_m$ with all pairwise linking numbers vanishing i.e.\ $\lk(L_i,L_j)=0$ for $i \neq j$.  Recall  that a link bounds disjointly embedded surfaces in $D^4$ if and only if it has vanishing pairwise linking numbers.  Define
\[M_L^\S := X_L \cup_{\bigsqcup_m \, S^1 \times S^1} \S \times S^1,\]
with a meridian of $L_j$ identified with $\{\ast\} \times S^1$, for $\ast \in \partial \S_j$, and with a zero-framed longitude of $L_j$ identified with $\partial \S_j \times \{1\}$.

\begin{lemma}\label{Lemma:homology_M_K^g}
The homology of $M_L^\S$ is given by:
\[H_i(M_L^\S) \cong \begin{cases} \Z & i = 0,3 \\  \Z^{2g + m} & i=1,2 \\ 0 & \text{otherwise.} \end{cases}\]
\end{lemma}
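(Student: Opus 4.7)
The plan is to compute the homology of $M_L^\S$ by applying Mayer--Vietoris to the decomposition $M_L^\S = X_L \cup_{T} (\S \times S^1)$, where $T := \bigsqcup_{i=1}^m S^1 \times S^1$ is the disjoint union of the $m$ gluing tori. First I would collect the homology of the three pieces. For the link exterior, standard calculations (via Lefschetz duality together with $\chi(X_L)=0$) give $H_0(X_L)=\Z$, $H_1(X_L)=\Z^m$ generated by meridians, $H_2(X_L)=\Z^{m-1}$ generated by the boundary tori subject to the single relation that their sum vanishes, and $H_i(X_L)=0$ for $i\geq 3$. Applying K\"unneth to each $\S_i\times S^1$ gives $H_0(\S\times S^1)=\Z^m$, $H_1(\S\times S^1)=\Z^{2g+m}$, and $H_2(\S\times S^1)=\Z^{2g}$. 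For the intersection, $H_0(T)=\Z^m$, $H_1(T)=\Z^{2m}$, $H_2(T)=\Z^m$.

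Next I would analyse the inclusion-induced maps. On $H_2$, the class $[\partial \nu L_i]$ hits the $i$-th generator of $H_2(X_L)$, so $H_2(T)\to H_2(X_L)$ is surjective with kernel $\Z\langle \sum_i [\partial\nu L_i]\rangle$. On the other side, $\partial \S_i$ is null-homologous in $\S_i$, so $[\partial \S_i \times S^1]=0$ in $H_2(\S_i\times S^1)$ by K\"unneth, and $H_2(T)\to H_2(\S\times S^1)$ is the zero map. On $H_1$, the gluing conventions (meridian $\leftrightarrow \{*\}\times S^1$, zero-framed longitude $\leftrightarrow \partial\S_j\times\{1\}$) send $\mu_i$ to the $i$-th generator of $H_1(X_L)$ and to the $S^1$-factor $\sigma_i$ of $H_1(\S_i\times S^1)$, whereas $\lambda_i$ is null-homologous in $X_L$ (all pairwise linking numbers vanish) and null-homologous in $\S_i\times S^1$ (since $[\partial \S_i]=0$ in $H_1(\S_i)$).

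The Mayer--Vietoris sequence then reads off cleanly. In degree $3$ one obtains $H_3(M_L^\S) = \ker(H_2(T)\to H_2(X_L)\oplus H_2(\S\times S^1))= \Z$, confirming that $M_L^\S$ is a closed oriented $3$-manifold. In degree $2$ we get a short exact sequence
\[
0 \to \coker\big(H_2(T)\to H_2(X_L)\oplus H_2(\S\times S^1)\big) \to H_2(M_L^\S) \to \ker f_1 \to 0,
\]
where $f_1\colon H_1(T)\to H_1(X_L)\oplus H_1(\S\times S^1)$ is the $H_1$-gluing map; the cokernel is $\Z^{2g}$ and $\ker f_1 \cong \Z^m$ is freely generated by the longitudes, so the sequence splits to give $H_2(M_L^\S)\cong \Z^{2g+m}$. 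In degree $1$, $\im f_1$ is the rank-$m$ subgroup spanned by the vectors $(e_i,-\sigma_i)$; since these extend to a basis of $\Z^m\oplus\Z^{2g+m}$, the quotient is torsion-free of rank $2g+m$, yielding $H_1(M_L^\S)\cong \Z^{2g+m}$. The computation $H_0(M_L^\S)=\Z$ is immediate from connectivity.

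The main obstacle is the bookkeeping around the gluing: one must interpret the conventions for the meridian and zero-framed longitude carefully in order to determine the ranks of $\ker f_1$ and $\im f_1$, and in particular to verify that $\im f_1$ is a direct summand so that $H_1(M_L^\S)$ is free. Once these identifications are pinned down, every step of the Mayer--Vietoris computation is routine.
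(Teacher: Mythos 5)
Your proposal is correct and takes essentially the same approach as the paper: a Mayer--Vietoris computation for the decomposition $M_L^\S = X_L \cup_{\bigsqcup_m S^1 \times S^1} (\S \times S^1)$, with the same analysis of the inclusion-induced maps (meridian goes to a generator on each side, longitude maps to zero on both sides, and the map $H_2(T)\to H_2(X_L)$ is onto with cyclic kernel while the map $H_2(T)\to H_2(\S\times S^1)$ is zero). Your explicit verification that $\im f_1$ is a direct summand, hence $H_1(M_L^\S)$ is free, is a point the paper leaves implicit but is in the same spirit.
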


\begin{proof}
A Mayer-Vietoris sequence computing $H_*(M_L^\S)$ is
\begin{align*}  \bigoplus_{j=1}^m \, H_{i}(S^1 \times S^1) \to H_i(X_L) \oplus \, \bigoplus_{j=1}^m \,H_i(\S_j \times S^1) \to H_i(M_L^\S) \to  \bigoplus_{j=1}^m\, H_{i-1}(S^1 \times S^1).\end{align*}
When $i=1$, we have
\[ \bigoplus_{j=1}^m \, \Z \oplus \Z \xrightarrow{\beta} \bigoplus_{j=1}^m \, (\Z \oplus (\Z^{2g_j} \oplus \Z)) \to H_1(M_L^\S) \xrightarrow{0} \bigoplus_{j=1}^m \,\Z,\]
since $H_1(\S_j \times S^1) \cong H_1(\S_j) \oplus H_1(S^1) \cong \Z^{2g_j} \oplus \Z$.  The map $\beta$ maps the $j$th summand to the $j$th summand, since the linking numbers of $L$ vanish.  On each summand, $\beta$ is given as follows.  The meridian of each torus maps to $(1,(0,1))$, while the longitude maps to zero, since it is a commutator of generators of $\pi_1(\S_j)$.  Therefore $H_1(M_L^\S) \cong \Z^{2g+m}$.
When $i=3$, we have:
\[0 \to H_3(M_L^\S) \to \bigoplus_{j=1}^m  H_2(S^1 \times S^1) \cong \Z^m \to H_2(X_L) \oplus\, \bigoplus_{j=1}^m  H_2(\S_j \times S^1). \]
We have that $H_2(X_L) \cong \Z^{m-1}$, and that $H_2(\S_j \times S^1) \cong H_1(\S_j) \otimes H_1(S^1) \cong H_1(\S_j)$.  The map $$\bigoplus_{j=1}^m \, H_2(S^1 \times S^1) \to \bigoplus_{j=1}^m \, H_2(\S_j \times S^1)$$ is the zero map, since the longitude of $L_j$ defines a trivial element of $H_1(\S_j)$.  The kernel of the map $$\bigoplus_{j=1}^m \, H_2(S^1 \times S^1) \to H_2(X_L)$$ is cyclic, generated by $(1,\dots,1) \in \bigoplus_{j=1}^m \, H_2(S^1 \times S^1)$.  The map $\bigoplus_{j=1}^m \, H_2(S^1 \times S^1) \to H_2(X_L)$ is onto.  Therefore $H_3(M_L^\S) \cong \Z$. The above descriptions of the maps in the Mayer-Vietoris sequence also imply that $H_2(M_L^\S)$ fits into the exact sequence
\[0 \to H_2(\S \times S^1) \cong \Z^{2g} \to H_2(M_L^\S) \to \Z^m \to 0,\]
where the final $\Z^m$ is the subgroup of $\bigoplus_{j=1}^m\, H_1(S^1 \times S^1)$ generated by the longitudes of the components of $L$.  Thus $H_2(M_L^\S) \cong \Z^{2g+m}$ as claimed.
\end{proof}

Next we give the following characterisation of links with four genera $g_1,\dots,g_m$.


\begin{proposition}\label{Prop:A}
An oriented, ordered link $L = L_1 \sqcup \dots \sqcup L_m$ bounds a collection of disjointly embedded surfaces in $D^4$ with genera equal to $(g_1,\dots,g_m)$ if and only if $M_L^\S$ is the boundary of a topological $4$-manifold $W$ that satisfies the following conditions.
\begin{enumerate}[(i)]
         \item\label{item:prop-A-i} On the subgroup $H_1(X_L) \leq H_1(M_L^\S)$, the inclusion induced map $j_* \colon H_1(M_L^\S) \to H_1(W)$ restricts to an isomorphism $$j_*| \colon H_1(X_L) \toiso H_1(W).$$
         \item\label{item:prop-A-ii} On the subgroup $H_2(\S \times S^1) \leq H_2(M_L^\S)$, the inclusion induced map $j_* \colon H_2(M_L^\S) \to H_2(W)$ restricts to an isomorphism $$j_*| \colon H_2(\S \times S^1) \toiso H_2(W) \cong \Z^{2g}.$$
         \item\label{item:prop-A-iii} The fundamental group $\pi_1(W)$  is normally generated by the meridians of $L$.
\end{enumerate}
\end{proposition}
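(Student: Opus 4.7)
The plan is to prove the two directions by identifying $W$ with the surface exterior $Y_\S := D^4 \setminus \nu \S$: in the forward direction this is read off from an embedding of $\S$, and in the reverse direction we reconstruct $D^4$ from $W$ by filling in a normal neighbourhood $\S \times D^2$.

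For the forward direction, I would take $W := Y_\S$. Its boundary decomposes as $X_L \cup (\S \times S^1)$, and the identifications of the meridian of $L_j$ with the $S^1$-factor and of the zero-framed longitude with $\p \S_j \times \{1\}$ match those in the definition of $M_L^\S$, so $\p W = M_L^\S$. The Mayer-Vietoris sequence for $D^4 = Y_\S \cup_{\S \times S^1}(\S \times D^2)$, combined with $H_*(D^4) = H_*(\op{pt})$, immediately yields $H_1(Y_\S) \cong \Z^m$ generated by meridians of $L$ and that the inclusion $H_2(\S \times S^1) \to H_2(Y_\S) \cong \Z^{2g}$ is an isomorphism, verifying (i) and (ii). Condition (iii) follows from Van Kampen applied to the same decomposition: $\pi_1(D^4) = 1$ forces $\pi_1(Y_\S)$ to be normally generated by the meridians of $\S$, which coincide with those of $L$.

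For the reverse direction, given $W$ with $\p W = M_L^\S$ satisfying (i)--(iii), I would set $W' := W \cup_{\S \times S^1} (\S \times D^2)$. The complementary piece $\p \S \times D^2$ re-attaches to $X_L$ as the tubular neighbourhood of $L$, because the meridian disc $\{\ast\} \times D^2$ of the solid torus $\p \S_i \times D^2$ has boundary $\{\ast\} \times S^1$, which by construction is the meridian of $L_i$. Hence $\p W' = S^3$, and condition (iii) together with the fact that these meridian discs kill the meridians of $L$ in $\pi_1(W')$ forces $\pi_1(W') = 1$. The key step is the Mayer-Vietoris calculation showing that $W'$ is acyclic. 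With respect to the K\"unneth basis of $H_1(\S \times S^1) \cong H_1(\S) \oplus \bigoplus_{i=1}^m H_1(S^1)$, and the meridian basis for $H_1(W) \cong \Z^m$ provided by (i), the map $H_1(\S \times S^1) \to H_1(W) \oplus H_1(\S \times D^2)$ has block form
\[\bp A & I_m \\ -I_{2g} & 0 \ep,\]
where $A$ is the a priori unknown map $H_1(\S) \to H_1(W)$; this matrix has determinant $\pm 1$ regardless of $A$, so it is an isomorphism. Condition (ii) does the analogous job in degree two, and chasing the rest of the Mayer-Vietoris sequence yields $H_k(W') = 0$ for $k \geq 1$.

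Since $W'$ is acyclic and simply connected, Whitehead makes it contractible; capping off $\p W' = S^3$ with a $4$-ball produces a simply connected homology $4$-sphere, which is homeomorphic to $S^4$ by Freedman's topological $4$-dimensional Poincar\'{e} theorem, so $W' \cong D^4$. Under this homeomorphism the slice $\S \times \{0\} \subset \S \times D^2 \subset W'$ becomes a disjoint collection of locally flat oriented surfaces of the required genera, whose boundary $\p \S \times \{0\} \subset \p \S \times D^2$ is the core $L$ of the restored tubular neighbourhood in $\p W' = S^3$. The step I expect to be the main obstacle is the $H_1$ Mayer-Vietoris computation, because conditions (i)--(iii) do not pin down the block $A$ measuring where $H_1$-classes of $\S$ land in $H_1(W)$; the trick is to observe that the $I_{2g}$ block, coming from the identity map $H_1(\S) \to H_1(\S \times D^2)$, already triangularises the matrix, so the determinant is insensitive to $A$.
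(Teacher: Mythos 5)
Your argument mirrors the paper's proof: in the forward direction take $W = Y_\S := D^4 \setminus \nu \S$ and apply Mayer-Vietoris and Seifert-Van Kampen to the decomposition $D^4 = Y_\S \cup_{\S \times S^1} (\S \times D^2)$; in the reverse direction form $W' = W \cup_{\S \times S^1} (\S \times D^2)$, show it is a contractible $4$-manifold with boundary $S^3$, and invoke Freedman to identify it with $D^4$. Your block-matrix observation --- that the unknown component $A \colon H_1(\S) \to H_1(W)$ cannot spoil invertibility because the matrix is block-anti-triangular --- is the right idea, and is in fact stated more explicitly than in the paper, which passes over this point rather quickly.

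There is one small omission worth flagging: ``chasing the rest of the Mayer-Vietoris sequence'' to conclude $H_3(W') = 0$ requires $H_3(W) = 0$ as input, since condition (\ref{item:prop-A-ii}) only yields a surjection $H_3(W) \twoheadrightarrow H_3(W')$ from the segment
\[
H_3(W) \oplus H_3(\S \times D^2) \to H_3(W') \to H_2(\S \times S^1) \xrightarrow{\,\cong\,} H_2(W) \oplus H_2(\S \times D^2).
\]
The paper supplies this via Poincar\'{e}-Lefschetz duality and universal coefficients: $H_3(W) \cong H^1(W, M_L^\S) \cong \Hom_\Z(H_1(W, M_L^\S),\Z) = 0$, using that $j_*$ is surjective onto $H_1(W)$ by~(\ref{item:prop-A-i}). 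Similarly, both Van Kampen applications need to be iterated one component of $\S$ at a time since $\S$ and $\S \times S^1$ are disconnected; the paper spells out that induction. Neither point changes the strategy, but both must be filled in for the homology and $\pi_1$ chases to close.
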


\begin{proof}
First, we suppose that $L$ is the boundary of a collection of surfaces $\S = \S_1,\dots,\S_m$ of genera $g_1,\dots,g_m$, embedded in $D^4$, and define $W:= D^4 \setminus \nu \S$.  We claim that $W$ satisfies properties (\ref{item:prop-A-i}), (\ref{item:prop-A-ii}) and (\ref{item:prop-A-iii}).  A Mayer-Vietoris sequence computing $H_*(W)$ is:
\[H_{i+1}(D^4) \to H_i(\S \times S^1) \to H_i(W) \oplus H_i(\S \times D^2) \to H_i(D^4).\]
When $i=1$, this yields:
\[0 \to \bigoplus_j\, H_1(\S_j \times S^1) \toiso H_1(W) \oplus \bigoplus_j\, H_1(\S_j) \to 0.\]
Since $H_1(\S_j \times S^1) \cong H_1(\S_j) \oplus H_1(S^1) \cong \Z^{2g_j} \oplus \Z$, and the $H_1(\S_j)$ components map isomorphically to one another, we see that $H_1(W) \cong \Z^m$, generated by the $H_1(S^1)$ summands, which correspond to the meridians of $L$.  Since these meridians are also the generators of $H_1(X_L)$, this shows that $W$ satisfies (\ref{item:prop-A-i}).
When $i=2$, we have
\[0 \to \bigoplus_j\, H_2(\S_j \times S^1) \toiso H_2(W) \to 0.\]
Since, for each $j$, $H_2(\S_j) =0 = H_2(S^1)$, we have that $H_2(\S_j \times S^1) \cong H_1(\S_j) \otimes H_1(S^1) \cong \Z^{2g_j} \otimes \Z \cong \Z^{2g_j}$.  Therefore $H_2(W) \cong \Z^{2g}$.  We also note that $H_3(W) \cong H_3(\S \times S^1) \cong 0$.  This shows that $W$ satisfies~(\ref{item:prop-A-ii}).

Now we use the Seifert Van-Kampen theorem to investigate the fundamental group of $W$.  Define $W_j := D^4 \setminus (\bigcup_{i=1}^j \, \S_j)$, so that $W = W_m$. We claim that, for all $j$, and so in particular for $j=m$, we have:
\[\frac{\pi_1(W_j)}{\langle \langle \mu_1,\dots\mu_j  \rangle\rangle} \cong \{1\},\]
where $\mu_i$ is a fundamental group element given by a meridian of the $i$th component of $L$.  We proceed by induction.  For the base case:
\begin{align*} \{1\} &\cong \pi_1(D^4) \cong \pi_1(\S_1 \times D^2) \ast_{\pi_1(\S_1 \times S^1)} \pi_1(W_1) \\
& \cong \pi_1(\S_1) \ast_{\pi_1(\S_1) \times \pi_1(S^1)} \pi_1(W_1) \\
& \cong \{1\} \ast_{\pi_1(S^1)} \pi_1(W_1)
 \cong \frac{\pi_1(W_1)}{\langle\langle \mu_1 \rangle\rangle},
\end{align*}
The penultimate isomorphism in the sequence follows from the observation that $\pi_1(\S_1) \times \pi_1(S^1) \to \pi_1(\S_1)$ is surjective with kernel $\pi_1(S^1)$.
For the inductive step, we show that
\[\frac{\pi_1(W_{j-1})}{\ll\ll \mu_1,\dots,\mu_{j-1}\rr\rr} \cong \frac{\pi_1(W_{j})}{\ll\ll \mu_1,\dots,\mu_{j}\rr\rr}. \]
To see this, follow a similar calculation to that above, to yield:
\[\pi_1(W_{j-1}) \cong \pi_1(\S_{j} \times D^2) \ast_{\pi_1(\S_{j} \times S^1)} \pi_1(W_{j}) \cong \frac{\pi_1(W_j)}{\langle\langle \mu_j \rangle\rangle}.\]
Take the quotient of both sides by $$\ll\ll \mu_1,\dots,\mu_{j-1}\rr\rr \cong \ll\ll \mu_1,\dots,\mu_j\rr\rr/\ll\ll \mu_j \rr\rr$$ to yield the iterative step and therefore the claim.
The above implies that $\pi_1(W)$ is normally generated by the meridians of $L$, which shows that $W$ satisfies (\ref{item:prop-A-iii}).  This completes the proof of the only if part of the proposition.

Now, suppose that $M_L^\S$ bounds a topological four manifold $W$ as in the statement of the proposition.  We shall prove that $L$ bounds a collection of locally flat oriented embedded surfaces with genera~$(g_1,\dots,g_m)$.  To begin, define another 4-manifold $$D:= W \cup_{\S \times S^1 \subseteq M_L^\S} \S \times D^2.$$
Any self-homeomorphism of each $\S_j$ that is the identity on the boundary will suffice for this construction, since a homeomorphism induces isomorphisms on homology and on fundamental groups.
Note that
\[\partial D = M_L^\S \sm \S \times S^1 \cup_{\partial \S \times S^1} \partial \S \times D^2 = S^3.\]
We calculate the homology and the fundamental group of $D$.  For the homology, we have the Mayer-Vietoris sequence
\[H_{i+1}(D) \to \bigoplus_{j=1}^m \, H_i(\S_j \times S^1) \toiso H_i(W) \oplus \bigoplus_{j=1}^m \, H_i(\S_j \times D^2) \to H_i(D) \to\]
for $i\geq 1$, with $H_1(D) \xrightarrow{0} H_0(\S \times S^1)$.  The central map is an isomorphism for $i=1$ since $H_1(\S_j \times S^1) \cong H_1(\S_j) \oplus H_1(S^1)$; the $H_1(\S_j)$ summands map isomorphically to the $H_1(\S_j \times D^2)$ summands, while the $H_1(S^1)$ summands collectively map isomorphically to $H_1(W)$ and by the zero map to $H_1(\S_j \times D^2)$, by property (\ref{item:prop-A-i}).  For $i=2$, $H_2(\S \times S^1)$ maps isomorphically to $H_2(W)$, by (\ref{item:prop-A-ii}).  Poincar\'{e}-Lefschetz duality and the universal coefficient theorem imply that $H_3(W) \cong H^1(W,M_L^\S) \cong \Hom_{\Z}(H_1(W,M_L^\S),\Z) \cong 0$.  Therefore $\wt{H}_*(D) = 0$.  For the fundamental group, we define  $W_j := W \cup (\bigcup_{i=0}^j\, \S_j \times D^2)$, with $\S_0 := \emptyset$, so that $W_0 = W$ and $W_m = D$. Again using the Seifert Van-Kampen theorem we have
\begin{align*}
\pi_1(W_{j}) & \cong \pi_1(W_{j-1}) \ast_{\pi_1(\S_j \times S^1)} \pi_1(\S_j \times D^2)\\
& \cong \pi_1(W_{j-1}) \ast_{\pi_1(\S_j) \times \pi_1(S^1)} \pi_1(\S_j)  \cong \frac{\pi_1(W_{j-1})}{\ll\ll \pi_1(S^1) \rr\rr}
\end{align*}
By (\ref{item:prop-A-iii}) and induction, we therefore have that $\pi_1(D) \cong \{1\}$.  Since $D$ has the homotopy groups of a 4-ball and $\partial D =S^3$, by Freedman's 4-dimensional topological $h$-cobordism theorem we deduce that in fact $D$ is homeomorphic to $D^4$~\cite{Freedman-Quinn:1990-1}.  The image of $\S \times \{0\}$ under this homeomorphism produces the required embedded surfaces.  This completes the proof of Proposition \ref{Prop:A}.
\end{proof}

\section{Proof of the infection by a string link theorem}\label{section:proof-of-infection-by-string-link-theorem}

In this section we give the proof of Theorem~\ref{Thm:our_main}.  As readers of \cite{CFT09} will recognise, the proof proceeds by constructing a 4-manifold $N'$ for the infection link that satisfies the conditions of Proposition~\ref{Prop:A}.  First we construct a 4-manifold $N$, whose boundary and whose second homology is slightly too big, and then we improve it to $N'$ by capping off the extra boundary with a special topological 4-manifold that is homotopy equivalent to a wedge of circles.  This will require the remainder of the article.  In this section, as in the previous section, all homology and cohomology groups are with $\Z$ coefficients.

\subsection{The $4$-manifold $N$}

Let $L= L_1 \sqcup \dots \sqcup L_m$ be an oriented, ordered link in $S^3$ with exterior $X_L := S^3\setminus \nu L$, which bounds locally flat, oriented, disjointly embedded surfaces $\S = \S_1 \sqcup \dots \sqcup \S_m$ in $D^4$.  Define $Y_{\S} := D^4 \setminus \nu \S,$
and let $\phi \colon \mathbb{E} \to S^3$ be an embedding of an $r$-multi-disc $\mathbb{E}$, whose image we denote by $\mathbb{E}_\phi$, such that the axes $\eta_1,\dots,\eta_r$  are closed curves in $X_L$ with $[\eta_k] = [1] \in \pi_1(Y_{\S})$ for all $k$.  Since $\eta_k$ is null-homotopic, putting a null homotopy in general position yields an immersed disc in $Y_{\S}$.  We also arrange these discs to be in general position with respect to each other.  Let $c$ be the total number of intersection and self-intersection points amongst these discs.  Let $J$ be a string link whose closure $\widehat{J}$ has vanishing Milnor's $\ol{\mu}$-invariants of length up to and including $2c$, that is $\ol{\mu}_{\widehat{J}}(I) =0$ for $|I| \leq 2c$.

Denote the image under $\phi$ of the complement of the $r$ sub-discs of the multi-disc $\mathbb{E}$ by $E_\phi$.  The space $E_\phi \times I \subseteq \partial Y_{\S}$ is a handlebody with $r$ 1-handles.  Let $M_{J}$ be the zero surgery on the closure $\widehat{J}$ of the string link $J$.  This zero surgery decomposes into the union of the exterior of $J$, the exterior of a trivial string link, and the $r$~solid tori from the zero surgery.  The exterior of a trivial string link with $r$ components is also a handlebody with $r$ 1-handles.  Denote its image in $M_J$ by $\mathbb{H} \subseteq M_J$.  Then identify these two handlebodies to form the union
\[N := Y_{\S} \cup_{E_\phi \times I = \mathbb{H} \subseteq M_J \times \{0\}} M_J \times [0,1]. \]
In what follows, let $S := S(L,\mathbb{E}_\phi,J)$ be the infection of $L$ by the string link $J$, with $r$-multi-disc $\mathbb{E}_{\phi}$ and axes $\eta_1,\dots,\eta_r$. For more details on this construction see \cite[Section~2.2]{CFT09}; the above is a summary of their exposition, with similar notation.  The main difference is that our $Y_{\S}$, which corresponds to their $W_L$, is the exterior of a collection of surfaces rather than the exterior of a collection of slice discs.

\begin{proposition}\label{Propn:Properties_of_N}
The $4$-manifold $N$ is such that:
\begin{enumerate}[(1)]
\item\label{item:properties-of-N-1} $\partial N = M_S^\S \sqcup -M_J$;
\item\label{item:properties-of-N-2} $\pi_1(N)$ is normally generated by the meridians of $S$;
\item\label{item:properties-of-N-3} $\im(\pi_1(M_J) \to \pi_1(N)) = \{1\}$;
\item\label{item:properties-of-N-4} the composition $H_1(X_S) \to H_1(M_S^\S) \to H_1(N)$ is an isomorphism; and
\item\label{item:properties-of-N-5} $H_2(M_J) \rightarrowtail H_2(N)$ is injective, with $H_2(N) \cong \Z^{2g+m}.$
\end{enumerate}
\end{proposition}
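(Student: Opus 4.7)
The plan is to analyze $N$ via the decomposition $N = Y_\S \cup_{\mathbb{H}} (M_J \times [0,1])$ along the 3-dimensional handlebody $\mathbb{H} \cong E_\phi \times I$, and then apply Seifert--van Kampen and Mayer--Vietoris. Item~(\ref{item:properties-of-N-1}) is a direct geometric identification: the handlebody $E_\phi \times I \subset X_L \subset \partial Y_\S$ is removed and replaced by $M_J \setminus \mathbb{H}$, which by the definition of the string link infection is precisely the cut-and-paste that turns $X_L$ into $X_S$. Thus the $\partial Y_\S$ contribution becomes $X_S \cup_{\partial X_S} S^1 \times \S = M_S^\S$, and the other component of $\partial(M_J \times [0,1])$ contributes $M_J \times \{1\} \cong -M_J$.

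For (\ref{item:properties-of-N-2}) and (\ref{item:properties-of-N-3}), Seifert--van Kampen gives $\pi_1(N) \cong \pi_1(Y_\S) *_{\pi_1(\mathbb{H})} \pi_1(M_J)$. The crux is that $\pi_1(\mathbb{H})$ is free on $r$ generators that include into $Y_\S$ as the axes $\eta_k$ and into $M_J$ as the meridians of $\widehat{J}$. Since the $\eta_k$ are null-homotopic in $Y_\S$ by hypothesis, amalgamation forces the meridians of $\widehat{J}$ to be trivial in $\pi_1(N)$; as these normally generate $\pi_1(M_J)$, the image of $\pi_1(M_J)$ in $\pi_1(N)$ is trivial, giving (\ref{item:properties-of-N-3}). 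For (\ref{item:properties-of-N-2}), observe that $\pi_1(Y_\S)$ is normally generated by the meridians of $L$ (by the Seifert--van Kampen argument in the proof of Proposition~\ref{Prop:A}); since the infection operation is localized away from those meridians, they remain meridians of $S$ in $N$, which, together with the image of $\pi_1(M_J)$ (trivial by (\ref{item:properties-of-N-3})), normally generate $\pi_1(N)$.

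For (\ref{item:properties-of-N-4}) and (\ref{item:properties-of-N-5}), apply the Mayer--Vietoris sequence
\[H_2(\mathbb{H}) \to H_2(Y_\S) \oplus H_2(M_J) \to H_2(N) \to H_1(\mathbb{H}) \xrightarrow{\partial} H_1(Y_\S) \oplus H_1(M_J) \to H_1(N) \to 0.\]
One has $H_2(\mathbb{H}) = 0$, and $H_1(\mathbb{H})$ is generated by loops that become null-homologous in $H_1(Y_\S)$ (as the $\eta_k$ are null-homotopic) and form a generating set for $H_1(M_J)$ (the meridians of $\widehat{J}$). Hence $\partial$ is injective, which forces $H_2(N) \cong H_2(Y_\S) \oplus H_2(M_J)$; in particular the inclusion $H_2(M_J) \hookrightarrow H_2(N)$ is injective, and the rank follows from $H_2(Y_\S) \cong \Z^{2g}$ (from the Mayer--Vietoris computation used in Lemma~\ref{lemma:homology-W-S}) together with the standard computation of $H_2(M_J)$. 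Similarly, $H_1(N) \cong \coker \partial \cong H_1(Y_\S) \cong \Z^m$, generated by the meridians of $L$, which coincide with those of $S$ outside the infection region; this identifies the composition $H_1(X_S) \to H_1(M_S^\S) \to H_1(N)$ as an isomorphism between two copies of $\Z^m$ generated by these meridians, proving (\ref{item:properties-of-N-4}).

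The most delicate step is the dictionary between the standard generators of $\pi_1(\mathbb{H}) \cong F_r$ and both the axes $\eta_k$ (on the $Y_\S$ side) and the meridians of $\widehat{J}$ (on the $M_J$ side): once this is set up carefully from the infection construction, every remaining item of the proposition reduces to the routine application of the two classical tools above.
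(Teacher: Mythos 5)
Your proof is correct and follows essentially the same route as the paper: Seifert--van Kampen applied to $N = Y_\S \cup_{\mathbb{H}} (M_J \times [0,1])$ for the fundamental group statements, and the corresponding Mayer--Vietoris sequence for the homology statements, with the key observations that the generators of $\pi_1(\mathbb{H})$ are the axes $\eta_k$ on one side (null-homotopic in $Y_\S$) and the meridians of $\widehat{J}$ on the other. The only differences are expository: you spell out why the connecting map $H_2(N)\to H_1(\mathbb{H})$ vanishes (via injectivity of $H_1(\mathbb{H})\to H_1(Y_\S)\oplus H_1(M_J)$), whereas the paper simply labels the arrow zero, and you mislabel that inclusion-induced map as~$\partial$.
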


\begin{proof}
Property (\ref{item:properties-of-N-1}) follows directly from the construction of $N$.  The Seifert-Van Kampen theorem gives us that
\[\pi_1(N) \cong \pi_1(Y_{\S}) \ast_{\pi_1(\mathbb{H})} \pi_1(M_J \times [0,1]),\]
where $\pi_1(\mathbb{H}) \cong \S_r$, the free group on $r$ letters, which is the group along which we amalgamate, is generated by the meridians of $J$.  Note that $\pi_1(M_J)$ is normally generated by the meridians of $J$.  Then recall that the meridian of the $k$th component of $J$ is identified with the curve $\eta_k$, and that our hypothesis is that each $\eta_k$ is null-homotopic in $Y_{\S}$.  Therefore $\pi_1(N) \cong \pi_1(Y_{\S})$.  Then, since a meridian of $L$ becomes a meridian of $S$ during the infection construction, we have proved~(\ref{item:properties-of-N-2}) and~(\ref{item:properties-of-N-3}).

Next, we calculate the homology of $N$.  The Mayer--Vietoris sequence yields:
\[H_1(\mathbb{H}) \to H_1(Y_{\S}) \oplus H_1(M_J) \to H_1(N) \to 0,\]
which translates to
\[\Z^r \xrightarrow{\left(\ba{c} 0 \\ \Id \ea\right)} \Z^m \oplus \Z^r \to H_1(N) \to 0.\]
There the first component of the first map is zero because the axis curves $\eta_k$ are null homotopic in $Y_{\S}$.
Therefore $H_1(N) \cong H_1(Y_{\S}) \cong \Z^m$.  Since the meridians of $L$ generate the homology $H_1(Y_{\S})$, it follows that the meridians of $S$ generate the homology $H_1(N)$.  This proves (\ref{item:properties-of-N-4}).  Another portion of the same Mayer--Vietoris sequence is the following:
\[H_2(\mathbb{H}) \to H_2(Y_{\S}) \oplus H_2(M_J) \to H_2(N) \xrightarrow{0} H_1(\mathbb{H}).\]
Since $H_2(\mathbb{H}) \cong 0$, this implies that
\[H_2(N) \cong H_2(Y_{\S}) \oplus H_2(M_J) \cong \Z^{2g} \oplus \Z^m \cong \Z^{2g+m},\]
Note that since the $\Z^{2g}$ summand of $H_2(N)$ comes from $H_2(Y_{\S})$, it is the image of the inclusion of $H_2(\S \times S^1)$, by the only if part of Proposition \ref{Prop:A} (\ref{item:prop-A-ii}). This completes the proof of (\ref{item:properties-of-N-5}) and therefore of Proposition~\ref{Propn:Properties_of_N}.
\end{proof}

We need to cap off the boundary component $M_J \times \{1\}$ of $N$, and we need to do so in such a way that $H_2(M_J)$ is killed, in order to construct a 4-manifold satisfying the conditions of Proposition~\ref{Prop:A} with respect to $M_S^\S$.  The next subsection outlines the construction which improves $N$ to a new four manifold $N'$. The subsection after that proves that $N'$ satisfies the conditions of Proposition \ref{Prop:A}.

\subsection{Improving $N$ to $N'$}

Since $\eta_1,\dots,\eta_r$ are null homotopic in $Y_{\S}$, they each bound an immersed disk in $Y_{\S}$, as observed above.  Recall that $\eta_k$ is identified with a meridian $\mu_k$ of the $k$th component of $J$ in $\mathbb{H} \subseteq M_J$.  Denote the immersed discs bounded by the $\eta_k$, together with, for each $k$, collars $\mu_k \times I \subset M_J \times I$, by $\delta_1,\dots,\delta_r$.  Take a regular neighbourhood of each disc in $N$, $\nu\delta_k$, and take its union with a collar of $M_J$,
\[M_J \times (1-\eps,1] \subseteq M_J \times [0,1].\]
Denote this union by
\[M_1 := \big(M_J \times (1-\eps,1]\big) \cup \bigcup_{k=1}^r\, \nu\delta_k. \]
Consider $N\setminus M_1$, and partition the boundary of $\cl M_1$ as $\partial (\cl M_1) = \partial^+ M_1 \sqcup \partial^- M_1$, where $\partial^- M_1 := M_J$.  Here we take the closure of $M_1$ inside $N$.  Therefore,
\[\partial(N \setminus M_1) \cong M_S^\S \sqcup \partial^+ M_1.\]
The following is \cite[Lemma~2.7]{CFT09}, although we remark that while (\ref{item:M3-c}) is only shown in the proof of \cite[Lemma~2.7]{CFT09}, here we have promoted it to a property.

\begin{lemma}\label{lemma:CFTtechnical_lemma}
There exists a 4-manifold $M_3$ with $\partial M_3 \cong \partial^+ M_1$, such that:
\begin{enumerate}[(a)]
\item\label{item:M3-a} The inclusion of the boundary induces an isomorphism $$H_1(\partial M_3) \toiso H_1(M_3).$$
\item\label{item:M3-b} The 4-manifold $M_3$ is homotopy equivalent to a wedge of $c$ circles, one for each double point of $\bigcup_{k=1}^r\,\delta_k$, with the fundamental group generated by the double point loops.
\item\label{item:M3-c} The meridians of the $\delta_k$ are null-homotopic in $M_3$.
\end{enumerate}
\end{lemma}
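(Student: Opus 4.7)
The plan is to follow the construction of \cite[Lemma~2.7]{CFT09}, making explicit the observation that property~(c) comes out of that construction for free. I would proceed in three steps.

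First, I would analyse $\partial^+ M_1$ by decomposing the regular neighbourhoods $\nu\delta_k$. Away from the double points, each $\nu\delta_k$ meets the collar $M_J\times(1-\eps,1]$ in a meridional solid torus $\mu_k\times D^2$, contributing a $2$-handle to $M_1$. At each of the $c$ double points, two sheets of $\bigcup_k\nu\delta_k$ plumb transversely, contributing an extra pair of handles. Assembling the local pictures identifies $\partial^+ M_1$ with a specific modification of $M_J$ by surgery along the $\eta_k$ together with handle data determined by the $c$ double points. In particular, $H_1(\partial^+ M_1)\cong\Z^c$, generated by the meridional loops around the double points of $\bigcup_k\delta_k$.

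Second, I would invoke the hypothesis $\ol{\mu}_{\widehat{J}}(I)=0$ for $|I|\leq 2c$. Via the grope realisation of Milnor invariants exploited in \cite{CFT09}, the closure $\widehat{J}$ bounds a capped grope $G$ of class $c$ in a $4$-ball $B^4$. Let $M_3$ be built from the exterior of $G$ in $B^4$ by attaching the matching handle data from Step~1, with the construction tuned so that $\partial M_3\cong\partial^+M_1$.

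Third, I would verify (a), (b) and (c). The exterior of a capped grope in a $4$-ball collapses up to homotopy onto a $1$-skeleton which in our construction is a wedge of exactly $c$ circles indexed by the double points of $\bigcup_k\delta_k$; these are precisely the double-point loops, so~(b) follows, with $\pi_1(M_3)$ free of rank~$c$. Property~(c) holds because each meridian of $\delta_k$ is identified with a meridian of a cap of $G$, which bounds a cap disc in $G$ and is therefore null-homotopic in $M_3$. Finally~(a) follows from~(b) together with the description of $\partial^+ M_1$ from Step~1, since the inclusion $H_1(\partial^+ M_1)\to H_1(M_3)$ sends the $\Z^c$ generators bijectively to the double-point loop generators of $H_1(M_3)\cong\Z^c$. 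The main obstacle is Step~2: producing the required capped grope from the Milnor-invariant hypothesis and arranging the combinatorics so that its exterior exactly caps off $\partial^+M_1$. This is the technical heart of \cite[Lemma~2.7]{CFT09}, where the length bound $2c$ on the vanishing Milnor invariants matches the $c$ double points via the correspondence between Milnor invariants and grope class.
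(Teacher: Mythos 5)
Your proposal invokes the right reference, but the construction you describe is not the one in \cite[Lemma~2.7]{CFT09}, and the route you sketch has a genuine gap. The paper (following CFT) does \emph{not} build $M_3$ as the exterior of a capped grope. Rather, the CFT strategy is to first build a candidate $4$-manifold $M_2$ with the correct boundary $\partial^+ M_1$ whose second homotopy group is nonzero, generated by $2$-spheres coming from $H_2(M_J)$, and one then wants to surger these spheres away. Since $\pi_1(M_2)$ is free, the disc embedding theorem is not known to apply, so surgery cannot be carried out directly. Instead one applies the Freedman--Teichner result from \cite{FT95II}: if the surgery kernel is represented by $\pi_1$-null spheres, then one can find an $s$-cobordant $4$-manifold in which the surgery can be completed. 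The Milnor invariant hypothesis $\ol{\mu}_{\widehat{J}}(I)=0$ for $|I|\leq 2c$ is used precisely to achieve this $\pi_1$-nullity. One thereby obtains \emph{some} $4$-manifold with the right boundary, homotopy equivalent to $\vee_c S^1$, which is all that is needed.

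Your Step~2 and Step~3 are where things break down. First, the relation you want between vanishing Milnor invariants of length $\leq 2c$ and gropes of \emph{class} $c$ is not the right one; vanishing of Milnor invariants up to length $2c$ corresponds to the longitudes lying deep in the lower central series, hence to gropes of class roughly $2c$, and in any case the relevant use of this hypothesis in the actual proof is via the Dwyer filtration of the surgery kernel, not via bounding a grope for $\widehat{J}$ in a fresh $4$-ball. Second, the assertion that the exterior of a capped grope in $B^4$ is homotopy equivalent to a wedge of exactly $c$ circles is unjustified and, as stated, false: the exterior of a $2$-complex with nontrivial $H_1$ in a $4$-ball generically has nontrivial $H_2$ by duality, and the caps will in general contribute further $2$-dimensional homology and homotopy. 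Getting the homotopy type of $M_3$ down to $\vee_c S^1$ is exactly the point of the $s$-cobordism argument, and cannot be read off from a grope-exterior picture. Finally, the boundary identification $\partial M_3\cong\partial^+M_1$, which you defer to ``arranging the combinatorics,'' is not a bookkeeping step: in the CFT argument it is built into the choice of the candidate manifold $M_2$ from the start, and your construction gives no mechanism for producing this specific $3$-manifold as a boundary.
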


This is a main technical lemma of \cite{CFT09}. It relies on techniques and results of \cite{FT95II}, which makes crucial use of \cite{Freedman-Quinn:1990-1}, in particular Section~5.3 and Chapter~6.
The proof of Lemma~\ref{lemma:CFTtechnical_lemma} starts with a candidate manifold with nonzero $\pi_2$, that one wishes to excise.  Since the fundamental group is free, it is not known to satisfy the $\pi_1$-null disc lemma, so surgery on embedded $2$-spheres to kill $\pi_2$ is not possible directly.  However, it is possible to find an $s$-cobordant manifold in which surgery is possible, provided the spheres in question are $\pi_1$-null.  Here $\pi_1$-null means that the fundamental group of the image of the spheres maps trivially into the fundamental group of the ambient manifold.  The assumption on Milnor's invariants is used in \cite{CFT09} to achieve $\pi_1$-nullity.  Since all we need is some $4$-manifold with the right homotopy type and the right boundary, this suffices.

With $M_3$ as in Lemma~\ref{lemma:CFTtechnical_lemma}, we define:
\[N' := N \setminus M_1 \cup_{\partial^+ M_1} M_3.\]
In the next subsection we show that $N'$ satisfies the conditions of Proposition \ref{Prop:A}.

\subsection{The $4$-manifold $N'$ satisfies Proposition \ref{Prop:A}}

As indicated by the title, this subsection contains the proof of the following proposition.

\begin{proposition}\label{prop:N-satisties-properties}
The $4$-manifold $N' := N \setminus M_1 \cup_{\partial^+ M_1} M_3$ satisfies the conditions of Proposition \ref{Prop:A} with respect to the link~$S$.
\end{proposition}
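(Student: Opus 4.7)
The plan is to verify the three conditions (\ref{item:prop-A-i}), (\ref{item:prop-A-ii}) and (\ref{item:prop-A-iii}) of Proposition~\ref{Prop:A} for $N'$ with respect to $S$, by means of Seifert--Van Kampen and Mayer--Vietoris applied to $N' = (N \sm M_1) \cup_{\partial^+ M_1} M_3$, comparing throughout with the parallel decomposition $N = (N \sm M_1) \cup_{\partial^+ M_1} M_1$. This allows me to leverage the known properties of $N$ from Proposition~\ref{Propn:Properties_of_N} and of $M_3$ from Lemma~\ref{lemma:CFTtechnical_lemma}, reducing the task to tracking the difference between $M_1$ and $M_3$ relative to $\partial^+ M_1$.

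For condition (\ref{item:prop-A-iii}), I would compare the Van Kampen presentations
\[\pi_1(N) \cong \pi_1(N \sm M_1) \ast_{\pi_1(\partial^+ M_1)} \pi_1(M_1), \qquad \pi_1(N') \cong \pi_1(N \sm M_1) \ast_{\pi_1(\partial^+ M_1)} \pi_1(M_3).\]
By Lemma~\ref{lemma:CFTtechnical_lemma}(b), $\pi_1(M_3)$ is free on the double-point loops of $\bigcup \delta_k$, which already lie in $\partial^+ M_1$ and so contribute no new generators beyond those coming from $\pi_1(\partial^+ M_1)$; moreover the meridians of the $\delta_k$ die in $\pi_1(M_3)$ by part (c), exactly as they do in $\pi_1(M_1)$. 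Since the generators of $\pi_1(M_J) \subset \pi_1(M_1)$ already die in $\pi_1(N)$ by Proposition~\ref{Propn:Properties_of_N}(\ref{item:properties-of-N-3}), the comparison will show that $\pi_1(N')$ surjects onto $\pi_1(N)$, which is normally generated by the meridians of $S$ by Proposition~\ref{Propn:Properties_of_N}(\ref{item:properties-of-N-2}).

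For conditions (\ref{item:prop-A-i}) and (\ref{item:prop-A-ii}), I would run the Mayer--Vietoris sequences of $N'$ and $N$ for the same two decompositions. Lemma~\ref{lemma:CFTtechnical_lemma}(a,b) gives $H_1(\partial^+ M_1) \toiso H_1(M_3) \cong \Z^c$ and $H_2(M_3) = 0$. Replacing $M_1$ by $M_3$ therefore preserves $H_1$, so $H_1(N') \cong H_1(N) \cong \Z^m$ by Proposition~\ref{Propn:Properties_of_N}(\ref{item:properties-of-N-4}), generated by the meridians of $S$, which yields (\ref{item:prop-A-i}) after identifying $H_1(X_S) \toiso H_1(N')$ via the composition $H_1(X_S) \to H_1(M_S^\S) \to H_1(N')$. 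On the other hand, because $H_2(M_3) = 0$, the $H_2(M_J) \cong \Z^m$ summand of $H_2(N) \cong \Z^{2g+m}$ from Proposition~\ref{Propn:Properties_of_N}(\ref{item:properties-of-N-5}), supported entirely inside $M_1$, is killed in $H_2(N')$, while the $\Z^{2g}$ summand carried by the image of $H_2(\S \times S^1) \subset H_2(Y_{\S}) \subset H_2(N \sm M_1)$ persists, giving (\ref{item:prop-A-ii}).

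The hardest step will be the $H_2$ bookkeeping: rigorously verifying both that the $H_2(M_J)$ summand of $H_2(N)$ is cobordant across $M_3$ to zero (so becomes boundary in $N'$), and that the image of the connecting map $H_2(\partial^+ M_1) \to H_2(N \sm M_1) \oplus H_2(M_3)$ does not impose spurious relations on the $H_2(\S \times S^1)$ classes or cut down the meridional $H_1$ generators. Once all three conditions are secured, Proposition~\ref{Prop:A} produces a collection of disjointly embedded, locally flat, oriented surfaces in $D^4$ with boundary $S$ and genera $(g_1,\ldots,g_m)$, which in turn completes the proof of Theorem~\ref{Thm:our_main}.
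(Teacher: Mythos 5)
Your overall framework is the right one --- you use the decomposition $N' = (N\setminus M_1) \cup_{\partial^+ M_1} M_3$, feed in Lemma~\ref{lemma:CFTtechnical_lemma} on $M_3$ and Proposition~\ref{Propn:Properties_of_N} on $N$, and run Mayer--Vietoris and Seifert--Van~Kampen. Conditions (\ref{item:prop-A-i}) and (\ref{item:prop-A-iii}) go through essentially as you describe, and you correctly identify (\ref{item:prop-A-ii}) as the delicate point. However, the proposal as written leaves a real gap precisely there.

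The gap is that ``comparing'' the two Mayer--Vietoris sequences for $N$ and $N'$ does not by itself control $H_2(N')$, because the term you would need is the image of $H_2(\partial^+ M_1) \to H_2(N\setminus M_1)$, and neither Proposition~\ref{Propn:Properties_of_N} nor Lemma~\ref{lemma:CFTtechnical_lemma} gives you direct access to $H_*(N\setminus M_1)$ or to that map. Your phrase that the $H_2(M_J)$ summand is ``supported entirely inside $M_1$, is killed in $H_2(N')$'' is the right intuition but is not an argument: a class living in the excised region $M_1$ does not automatically say anything about what $H_2(N')$ is. Concretely, you still need (a) that $H_2(N\setminus M_1)$ fits into an exact sequence $\Z^c \to H_2(N\setminus M_1) \to \Z^{2g} \to 0$, with the $\Z^c$ generated by the Clifford tori of the double points of $\bigcup_k\delta_k$ and the $\Z^{2g}$ coming from $H_2(\S\times S^1)$; (b) that these Clifford tori lie in $\partial^+ M_1 = \partial M_3$, so that $H_2(\partial M_3)\cong\Z^c$ surjects onto that $\Z^c$; and (c) that $H_2(N) \twoheadrightarrow H_2(N,N\setminus M_1)\cong\Z^r$, which both shows the Seifert-surface generators of $H_2(M_J)$ are dealt with and gives $H_1(N\setminus M_1)\cong H_1(N)$, needed for (\ref{item:prop-A-i}). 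The paper supplies all of this by first computing $H_*(N,N\setminus M_1)$ via excision and Poincar\'e--Lefschetz duality --- obtaining $\Z^r$ in degree $2$, $\Z^c$ in degree $3$, generated by transverse discs and the solid tori near the double points respectively --- and then using the long exact sequence of the pair $(N,N\setminus M_1)$. This preparatory step, which your plan does not mention and which is not a consequence of your comparison strategy, is what actually makes the $H_2$ bookkeeping go through. If you add this relative-homology computation (or an equivalent explicit analysis of $H_*(\partial^+M_1)\to H_*(N\setminus M_1)$), your proof strategy closes the gap and recovers the paper's argument.
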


\begin{proof}
First, we seek to understand the homology of $\partial^+ M_1$, $N\setminus M_1$, and we aim to understand the inclusion induced maps $H_*(\partial^+ M_1) \to H_*(N \setminus M_1)$ and $\pi_1(\partial^+ M_1) \to \pi_1(N \setminus M_1)$.  Following \cite{CFT09}, note that $N \setminus M_1 \cong N\setminus \cup_{k=1}^r\,\nu \delta_k$, where $\nu \delta_k$ is a regular neighbourhood of $\delta_k$.  The boundary of $\cup_{k=1}^r\,\nu\delta_k$ splits as
\[\partial\big(\cup_{k=1}^r\,\nu\delta_k\big) = \cup_{k=1}^r\,\nu(\partial\delta_k) \cup \partial'\big(\cup_{k=1}^r\,\nu\delta_k\big),\]
where, for each $k$, $\nu(\partial\delta_k) \cong S^1 \times D^2$ and $\partial'(\cup_{k=1}^r\,\nu\delta_k)$ is what remains of the boundary.  Then, by excision and Poincar\'{e}-Lefschetz duality, we have that:
\begin{align*}
H_i(N,N\setminus M_1) &\cong H_i(N,N\setminus \cup_{k=1}^r\,\nu \delta_k) \cong H_i(\cup_{k=1}^r\,\nu\delta_k,\partial'(\cup_{k=1}^r\,\nu\delta_k)) \\ &\cong H^{4-i}(\cup_{k=1}^r\,\nu\delta_k,\cup_{k=1}^r\,\nu(\partial\delta_k)) \cong \begin{cases}\Z^r & i=2 \\ \Z^c & i=3 \\ 0 & \text{otherwise}. \end{cases}
\end{align*}

For $i=2$, the homology $H_2(N,N\setminus M_1)$ is generated by the images, for each $k$, of maps $\{\ast\} \times D^2 \subseteq D^2 \times D^2 \to \nu \delta_k$.  These are transverse discs to the $\delta_k$.

In a neighbourhood $U \cong D^4$ of a double point of $\bigcup_{k=1}^r\,\delta_k$, recall that the two sheets intersect $\partial U \cong S^3$, which is the boundary of a neighbourhood of the double point, in a Hopf link $K_0 \cup K_1$.  The exterior $S^3 \setminus \nu(K_0 \sqcup K_1)$ of this Hopf link is homeomorphic to $S^1 \times S^1 \times I$, and $S^1 \times S^1 \times \{1/2\}$ is by definition a Clifford torus of the double point.  The solid torus $S^1 \times S^1 \times [0,1/2] \cup \nu K_0$ contains one component of the Hopf link $K_0$ as its core.  In the case of a self-intersection of a disc $\delta_k$, the double point loop is a loop on $\delta_k$ which starts and ends at the double point, leaving and returning on different sheets and avoiding all other double points.   In the case of an intersection between discs $\delta_k$ and $\delta_{k'}$, the double point loop leaves the intersection point along $\delta_k$ and returns along $\delta_{k'}$, joining up between $\eta_k$ and $\eta_{k'}$ in $M_J$.  A double point loop intersects $K_0$ at a single point.  The solid tori $S^1 \times S^1 \times [0,1/2] \cup \nu K_0$ described above, one for each double point of $\bigcup_{k=1}^r\,\delta_k$, generate $H_3(N,N\setminus M_1)$, since they can be kept within $\bigcup_{k=1}^r\,\nu \delta_k$.

The long exact sequence of the pair $(N,N \sm M_1)$ therefore yields:
\[\to \Z^c \xrightarrow{\partial} H_2(N \setminus M_1) \to H_2(N) \to \Z^r \xrightarrow{\partial} H_1(N \setminus M_1) \to H_1(N) \to 0.\]
Recall that $H_2(N) \cong \Z^{2g+m}$ and $H_1(N) \cong \Z^m$.  A generator of $H_2(N)$ which is the image of a generator of $H_2(M_J)$ in Proposition \ref{Propn:Properties_of_N} (5), is a capped off Seifert surface for a component of $J$.  The capping is done by a disc which becomes a transverse disc to $\delta_k$ in $\nu(\partial \delta_k)$, i.e.\ $\{\ast\} \times D^2 \subseteq D^2 \times D^2$ with $\ast \in S^1$.  Therefore the map $H_2(N) \twoheadrightarrow \Z^r= H_2(N,N\sm M_1)$ is surjective.  This implies that
\[H_1(N \setminus M_1) \cong H_1(N) \cong \Z^m. \]
Note that the other generators of $H_2(N)$, those which are the image of generators of $H_2(Y_{\S}) \cong \Z^{2g}$, map trivially into $H_2(N, N \setminus M_1) \cong \Z^r$.  This gives us an exact sequence
\[\Z^c \to H_2(N \setminus M_1) \to \Z^{2g} \to 0.\]
Recall from Proposition~\ref{Propn:Properties_of_N}~(\ref{item:properties-of-N-2}) that $\pi_1(N)$ is normally generated by the meridians of $S$.  We can assume that any homotopies are transverse to the $\delta_k$.  Therefore $\pi_1(N \setminus M_1)$ is normally generated by the meridians of $S$ and the meridians of the~$\delta_k$.

Now that we understand the homology and the fundamental group of $N \setminus M_1$, we are in a position to calculate the homology and the fundamental group of~$N'$.  Recall that~$M_3$ is a 4-manifold which is homotopy equivalent to $\vee_c S^1$, corresponding to the double point loops of the intersections and self-intersections of the $\delta_k$, with $\partial M_3 = \partial^+ M_1$, and $H_1(\partial M_3) \toiso H_1(M_3) \cong \Z^c$ an isomorphism.  The Mayer-Vietoris sequence for $N' = N \setminus M_1 \cup M_3$ yields the exact sequence:
\[H_1(\partial M_3) \to H_1(N \setminus M_1) \oplus H_1(M_3) \to H_1(N') \to 0.\]
Since $H_1(\partial M_3) \toiso H_1(M_3)$ is an isomorphism, we have that $H_1(N\setminus M_1) \toiso H_1(N')$.  Together with the isomorphism $H_1(N\setminus M_1) \toiso H_1(N)$ from above, this shows that property (\ref{item:prop-A-i}) of Proposition \ref{Prop:A} is satisfied.

Next, the Mayer-Vietoris sequence for $N' = N \setminus M_1 \cup M_3$ also gives rise to the exact sequence
\[H_2(\partial M_3) \to H_2(N \setminus M_1) \oplus H_2(M_3) \to H_2(N') \to 0.\]
Since $H_2(M_3) \cong 0$, we have that
\[H_2(N') \cong \coker(H_2(\partial M_3) \to H_2(N\setminus M_1)).\]
Recall that $H_2(N\setminus M_1)$ sits in the exact sequence
\[\Z^c \to H_2(N \setminus M_1) \to \Z^{2g} \to 0,\]
where the $\Z^c$ is generated by solid tori described above associated to each double point, and the map $\Z^c \to H_2(N \setminus M_1)$ is the boundary map from the long exact sequence of a pair.  The Clifford tori therefore generate the image of $\Z^c$ in $H_2(N \setminus M_1)$.  But these Clifford tori can be assumed to live in $\partial^+ M_1$, which is glued to $\partial M_3$.  Moreover, since $\partial M_3$ is a closed 3-manifold, Poincar\'{e} duality forces $H_2(\partial M_3) \cong \Z^c$. Therefore $$H_2(N') \cong \coker(H_2(\partial M_3) \to H_2(N\setminus M_1)) \cong H_2(N \setminus M_1)/\im(\Z^c) \cong \Z^{2g},$$ generated by the image of $H_2(Y_{\S})$ as required.
We have now shown that capping off with $M_3$ serves to kill the generators of second homology which came from $H_2(M_J)$.  As in the proof of Proposition \ref{Propn:Properties_of_N}, the remaining $\Z^{2g}$ summand is the isomorphic image of $H_2(\S \times S^1)$.  Therefore property (\ref{item:prop-A-ii}) of Proposition \ref{Prop:A} is satisfied.

Finally, recall that $\pi_1(M_3)$ is generated by double point loops, and note that these double point loops come from the boundary.  So  $$\pi_1(\partial M_3) \twoheadrightarrow \pi_1(M_3)$$ is surjective.  Therefore, since
\[\pi_1(N') \cong \pi_1(N\setminus M_1) \ast_{\pi_1(\partial M_3)} \pi_1(M_3),\]
we have that $\pi_1(N\setminus M_1)$ surjects onto $\pi_1(N')$.  Since $\pi_1(N\setminus M_1)$ is normally generated by the meridians of $S$ and of the $\delta_k$, so is $\pi_1(N')$.
Recall from Lemma~\ref{lemma:CFTtechnical_lemma}~(\ref{item:M3-c}) that the meridians of the $\delta_k$ are null homotopic in $M_3$.
%
%
Therefore $\pi_1(N')$ is normally generated by the meridians of $S$, so $N'$ satisfies property (\ref{item:prop-A-iii}) of Proposition \ref{Prop:A}.
This completes the proof that $N'$ is a 4-manifold with boundary $M_S^\S$, satisfying the conditions of Proposition~\ref{Prop:A}.
\end{proof}

Therefore by Propositions~\ref{prop:N-satisties-properties} and~\ref{Prop:A}, the link $S$ bounds a collection of disjointly embedded surfaces in $D^4$ with genera $g_1,\dots,g_m$, as claimed.  This completes the proof of Theorem \ref{Thm:our_main}.

\subsection*{Acknowledgements}

Sebastian Baader and Lukas Lewark pointed out that explicit treatment of the relationship between the locally flat 4-genus and Levine-Tristram signatures in the literature has perhaps been insufficient.  I am also indebted to Lukas Lewark for showing me the Seifert surface approach to finding a genus four surface for the example in Section~\ref{section:example}.
In addition, I would like to thank Jae Choon Cha, Peter Feller, Stefan Friedl, Pat Gilmer, Chuck Livingston, Matthias Nagel and the referee for their helpful comments and suggestions.  I am grateful to the Max Planck Institute for Mathematics in Bonn for its hospitality during the time when this paper was written.  I am supported by the NSERC grant ``Structure of knot and link concordance.''

\bibliographystyle{alpha}
\def\MR#1{}
\bibliography{research}

\end{document}